\def\@cite#1#2{{\m@th\upshape\bfseries%
[{#1\if@tempswa{\m@th\upshape\mdseries, #2}\fi}]}}
\theoremstyle{plain}
\newtheorem{theorem}{Theorem}[section]
\newtheorem{proposition}[theorem]{Proposition}
\newtheorem{lemma}[theorem]{Lemma}
\theoremstyle{definition}
\newtheorem{definition}[theorem]{Definition}
\newtheorem{remark}[theorem]{Remark}
\theoremstyle{remark}
\newcommand{\bbC}{{\mathbb{C}}}
\newcommand{\bbI}{{\mathbb{I}}}
\newcommand{\bbN}{{\mathbb{N}}}
\newcommand{\A}{{\mathcal{A}}}
\newcommand{\B}{{\mathcal{B}}}
\newcommand{\C}{{\mathcal{C}}}
\newcommand{\G}{{\mathcal{G}}}
\renewcommand{\H}{{\mathcal{H}}}
\newcommand{\J}{{\mathcal{J}}}
\newcommand{\K}{{\mathcal{K}}}
\renewcommand{\O}{{\mathcal{O}}}
\newcommand{\T}{{\mathcal{T}}}
\newcommand{\X}{{\mathcal{X}}}
\renewcommand{\phi}{\varphi}
\newcommand{\upchi}{{\raise.35ex\hbox{\ensuremath{\chi}}}}
\newcommand{\Aut}{\operatorname{Aut}}
\newcommand{\id}{{\operatorname{id}}}
\newcommand{\ca}{\mathrm{C}^*}
\newcommand{\cenv}{\mathrm{C}^*_{\text{env}}}
\newcommand{\cmax}{\mathrm{C}^*_{\text{max}}}
\newcommand{\sot}{\textsc{sot-}}
\newcommand{\sca}[1]{\left\langle#1\right\rangle}
\newcommand\cpr{\rtimes_{\alpha}^{r}\, {\mathcal{G}}}
\newcommand\cpf{\rtimes_{\alpha}\, {\mathcal{G}}}
\newcommand\cpd{ \hat{\rtimes}_{\alpha}\, {\mathcal{G}}}
\begin{document}

\title[Discrete Crossed Products]{$\ca$-envelopes and the Hao-Ng Isomorphism for discrete groups}

\author[E.G. Katsoulis]{Elias~G.~Katsoulis}
\address {Department of Mathematics
\\East Carolina University\\ Greenville, NC 27858\\USA}
\email{katsoulise@ecu.edu}

\thanks{2010 {\it  Mathematics Subject Classification.}
46L07, 46L08, 46L55, 47B49, 47L40, 47L65}
\thanks{{\it Key words and phrases:} crossed product, $\ca$-correspondence, Cuntz-Pimsner algebra, tensor algebra, $\ca$-envelope, Hao-Ng isomorphism, operator algebra}

\maketitle

\begin{abstract}
We establish the isomorphism
\[
\O_X \cpr \simeq \O_{X\cpr} 
\]
for any non-degenerate $\ca$-correspondence $(X, \C)$ and any discrete group $\G$ acting on $(X, \C)$. In the case where $(X, \C)$ is hyperrigid, a similar formula is also established for the full crossed product. These results are obtained by studying the $\ca$-envelopes and the hyperrigidity of a related class of non-selfadjoint crossed products that were introduced recently by the author and Chris Ramsey.
\end{abstract}

\section{introduction}
 Let $\G$ be a locally compact group acting on a $\ca$-correspondence $(X, \C)$. By the universality of the Cuntz-Pimsner $\ca$-algebra $\O_X$,  $\G$ also acts on $\O_X$. The Hao-Ng Theorem \cite[Theorem 2.10]{HN} asserts that
\[
\O_X \cpf \simeq \O_{X\cpf} 
\]
provided that $\G$ is an amenable locally compact group. This elegant result is having an increasing impact on current $\ca$-algebra research \cite{Ab, Deaconu, DKQ, Sch}. The \textit{Hao-Ng isomorphism problem} asks whether the above isomorphism remains valid, for either the full or the reduced crossed product, if one moves beyond the class of amenable groups. Given the important role that the crossed products of $\ca$-correspondences by group actions play in $\ca$-algebra theory, e.g. \cite{Es, Kasparov}, it is not surprising that the Hao-Ng isomorphism problem is currently under investigation by several $\ca$-algebraists \cite{BKQR, KQR, KQR2, Kim, Morgan}. We note here that even the 

The primary purpose of this paper is to provide a positive resolution of the Hao-Ng isomorphism problem for all discrete groups. Specifically, in Theorem~\ref{HaoNgreduced} we verify the Hao-Ng isomorphism for the reduced crossed product of any non-degenerate $\ca$-correspondence $(X, \C)$ by any discrete group $\G$. This result has been sought after by others as it was previously obtained only in special cases. (See in particular \cite[Theorem 5.5]{BKQR} where the authors deal with case of an \textit{exact} discrete group.) We also investigate the full crossed product and we obtain similar results there: Theorem~\ref{HaoNgfull} shows that the Hao-Ng isomorphism holds for the full crossed product of a \textit{hyperrigid} $\ca$-correspondence by any discrete group.  In particular this applies to the crossed product of a (row finite) graph correspondence by any discrete group. (See below for definitions.)

Our contribution to the Hao-Ng isomorphism problem stems from non--selfadjoint considerations. Indeed in our recent paper \cite{KR}, Chris Ramsey and the author developed a theory of crossed products that allows for a locally compact group to act on an arbitrary operator algebra, not just a $\ca$-algebra. In \cite[Section 7]{KR} we made a good case that the Hao-Ng isomorphism problem is intimately related to the positive resolution of the crossed product identities
\begin{equation} \label{cenvfull}
\cenv\big( \A \cpf \big) \simeq \cenv(\A) \cpf,
\end{equation}
and
\begin{equation} \label{cenvred}
\cenv\big( \A \cpr \big) \simeq \cenv(\A) \cpr,
\end{equation}
where $(\A , \G, \alpha)$ denotes a (not necessarily self adjoint) dynamical system. This is also the approach that we adopt here. As it turns out, the central result of this paper, Theorem~\ref{discrenv}, verifies the identity (\ref{cenvred}) in the case where $\A$ is any approximately unital operator algebra and $\G$ any discrete group. However the identity (\ref{cenvfull}) is not  being verified here. Instead we verify a similar identity for a related crossed product in the case where $\G$ is discrete and $\A$ is hyperrigid. This suffices to establish a ``Hao-Ng type isomorphism" for the full crossed product of a hyperrigid $\ca$-correspondence by a discrete group.

Hyperrigidity plays an important role in this paper. In the last section of the paper we strengthen the ties between hyperrigidity and the theory of crossed products by showing the permanence of hyperrigidity under the reduced crossed product (Theorem~\ref{hr}). We also indicate the permanence of hyperrigidity under the relative full crossed product associated with the $\ca$-envelope of an operator algebra; see Theorem~\ref{hrf}.  Both results increase the supply of hyperrigid operator algebras and therefore the applicability of the results of the earlier sections.

We hope that the reader will appreciate both our contribution to the Hao-Ng isomorphism problem \textit{and} the techniques involved in achieving it. Contrary to one might have expected from previous considerations, the Hao-Ng isomorphism problem, a seemingly selfadjoint problem, is very amenable to non-selfadjoint techniques. This places the present work right at the crossroads of the selfadjoint and non-selfadjoint operator algebra theory. It goes without saying that the paper should be of interest not only to the practitioners of the specific problem it addresses but also to any operator algebraist interested in the way the selfadjoint and the non-selfadjoint theories interact with each other.


\section{Crossed products and their $\ca$-envelopes}

In this paper all operator algebras are assumed to be approximately unital, i.e., they have a contractive approximate unit. Nevertheless on occasion we will need to exploit the richer structure of unital operator algebras.  If $\A$ is an operator algebra without a unit, let $\A^1 \equiv \A + \bbC I$. If $\phi : \A \rightarrow \B$ is a completely isometric homomorphism between non-unital operator algebras, then Meyer \cite{Mey} shows that $\phi$ extends to a complete isometry $\phi^1: \A^1 \rightarrow \B^1$. This shows that the unitization of $\A$ is unique up to complete isometry.

Given an operator algebra $\A$, a $\ca$-cover $(\C, j)$ for $\A$ consists of a $\ca$-algebra $\C$ and a completely isometric multiplicative injection $ j : \, \A \rightarrow \C$ with $\C = \ca(j(\A))$. There are two distinguished $\ca$-covers associated with any (approximately unital) operator algebra $\A$.

The $\ca$-envelope $\cenv(\A) \equiv(\cenv(\A), j)$ of $\A$ is the universal $\ca$-cover of $\A$ with the following property: for any cover $(\C , i)$ of $\A$  there exists a $*$-epimorphism $\phi : \, \C \rightarrow \cenv(\A)$ so that $\phi(i(a)) = j(a)$, for all $a \in \A$. (See \cite[Proposition 4.3.5]{BlLM}.)

If $\A$ is an operator algebra then there exists a $\ca$-cover $\cmax(\A) \equiv ( \cmax(\A) , j)$ with the following universal property: if $\pi: \A \rightarrow \C$ is any completely contractive homomorphism into a $\ca$-algebra $\C$, then there exists a (necessarily unique) $*$-homomorphism $\phi: \cmax(\A) \rightarrow \C$ such that $\phi\circ j = \pi$. The cover $\cmax(\A)$ is called the maximal or universal $\ca$-algebra of $\A$. (See \cite[Proposition 2.4.2]{BlLM}.)

All groups appearing in this paper are thought to be discrete. A discrete dynamical system $(\A, \G, \alpha)$ consists of an approximately unital operator algebra $\A$ and a discrete group $\G$  acting on $\A$ by completely isometric automorphisms, i.e., there exists a group representation $\alpha: \G \rightarrow \Aut\A$. (Here $\Aut \A$  denotes the collection of all completely isometric automorphisms of $\A$.) Let $(\A, \G, \alpha)$ be a discrete dynamical system and let $(\C, j)$ be a $\ca$-cover of $\A$. Then $(\C, j)$ is said to be $\alpha$-admissible, if there exists a group representation $\dot{\alpha}: \G \rightarrow \Aut(\C)$ which extends the representation
  \begin{equation} \label{detailona}
  \G \ni s \mapsto j\circ \alpha_s \circ j^{-1} \in \Aut (j(\A)).
  \end{equation}
Since $\dot{\alpha}$ is uniquely determined by its action on $j(\A)$, both (\ref{detailona}) and its extension $\dot{\alpha}$ will be denoted by the symbol $\alpha$.

As we show in \cite[Lemma 3.3]{KR}, for any dynamical system $(\A, \G, \alpha)$, there are at least two $\alpha$-admissible $\ca$-covers, $(\cenv( \A), j)$ and $(\cmax (\A), j)$.

\begin{definition}[Relative Crossed Product] \label{relative}
Let $(\A, \G, \alpha)$ be a dynamical system and let $(\C, j)$ be an $\alpha$-admissible $\ca$-cover for $\A$. Then, $\A \rtimes_{\C, j, \alpha} \G$ and $\A \rtimes_{\C, j, \alpha}^r \G$ will denote the closed subalgebras of the crossed product $\ca$-algebras $\C \rtimes_{\alpha} \G$ and $\C \rtimes_{\alpha}^r \G$ respectively, which are generated by all finite sums of the form $\sum_{g \in \G} a_gu_g$, with $a_g \in \G$ and $u_g$ being the ``universal" unitaries implementing $\alpha_g$, $g \in \G$.
\end{definition}

As it turns out \cite[Theorem 3.12]{KR}, all reduced crossed products coincide in a canonical way. We therefore have a unique object for the reduced crossed product which we denote as $\A \cpr$. In the case of the full crossed product however the situation remains mysterious and we do not know yet if all such crossed products coincide. 

\begin{definition}[Full Crossed Product] \label{fulldefn}
If $(\A, \G, \alpha)$ is a dynamical system then
\[
\A \rtimes_{\alpha} \G \equiv  \A \rtimes_{\cmax(\A), \alpha} \G
\]
\end{definition}

In \cite{KR} we make a good case that the above definition is the ``right one" for the full crossed product. Indeed, in \cite[Proposition 3.7]{KR} we show that $\A\cpf$ is the universal algebra for all covariant representations of the system $(\A, \G, \alpha)$, i.e., the completely contractive representations of $\A \cpf$ coincide with the collection of all integrated forms of covariant representations for $(\A, \G, \alpha)$.

Nevertheless in this paper we will not be able to extract any useful information from $\A \cpf$  regarding the Hao-Ng isomorphism problem for the full crossed product. As it turns out the ``right" crossed product for us is $\A \rtimes_{\cenv(\A), \alpha} \G$. Note however that it is an open problem of \cite{KR} whether these two crossed products coincide.

In order to establish our Hao-Ng isomorphism for discrete groups, we will need to identify the $\ca$-envelopes of the various crossed products discussed above. We briefly review the results required from the pertinent theory.

 Let $\A$ be a unital operator algebra (actually operator space will suffice for this paragraph) and $\pi:  \A \rightarrow B(\H)$ be a unital completely contractive map. A \textit{dilation} $\rho:\A \rightarrow B(\K)$ for $\pi$ is another unital completely contractive map satisfying $P_{\H} \rho(.)\mid_{\H}= \pi$; such a dilation $\rho$ for $\pi$ is called trivial if $\rho(\A)$ reduces $\H$. A unital completely contractive map is called \textit{maximal} if it admits no non-trivial dilations. A unital completely contractive map is said to have the \textit{unique extension property} if there is a unique unital completely positive extension of $\pi$ on $\ca(\A)$ which is also multiplicative. It turns out that maximality and the unique extension property are equivalent properties for a unital completely contractive map \cite{MSbdy}. Dritschel and McCullough \cite{DrMc} have shown that any unital completely contractive representation $\pi$ of a unital operator algebra $\A$ admits a maximal dilation $\rho$, which by the unique extension property is automatically multiplicative.  (The reader may have realized that in this paragraph we have been within the realm of a unital category and so all maps are either assumed or required to be unital.) 

If $(\C , j)$ is a $\ca$-cover of a unital operator algebra $\A$, then there exists a largest ideal $\J \subseteq \C$, the \textit{Shilov ideal} of $\A$ in $(\C, j)$, so that the quotient map $\C \rightarrow \C \slash \J$ when restricted on $j(\A)$ is completely isometric. It turns out that $\cenv(\A) \simeq \C \slash \J$. A related result asserts that if $\pi: \A \rightarrow B(\H)$ is a completely isometric representation of a unital operator algebra $\A$ and $\rho$ a maximal dilation of $\pi$, then $\big( \ca \big(\rho(\A)\big), \rho\big) \simeq \cenv(\A)$. See \cite[Section 5]{Kat} for a recent exposition and proofs of these facts.

If $\A$ is a non unital operator algebra then we can describe the $\ca$-envelope of $\A$ by invoking its unitization as follows: if $\cenv(\A^1)= \big(\cenv(\A^1), j_1\big)$, then $\cenv(\A) \simeq (\C, j)$, where $\C \equiv \ca(j_1(\A))$ and $j\equiv {j_1}_{\mid \A}$. See \cite{Arvenv, DrMc, Kat} for more details.

There is another approach to the $\ca$-envelope of an operator algebra utilizing the concept of an injective envelope. Had we been using the ``injective envelope" approach, then the following result would have been immediate. Using the ``maximal representation" approach, it requires an extra argument.

\begin{lemma} \label{specialmax}
Let $\A$ be a unital operator algebra. Then there exists a unital completely isometric maximal representation $\pi : \A \rightarrow B(\H)$ which extends to a faithful $*$-representation of $\cenv(\A)$.
\end{lemma}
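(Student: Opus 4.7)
The plan is to build the desired $\pi$ by taking a faithful representation of the $\ca$-envelope, restricting to $\A$ to get a completely isometric unital representation, and then dilating to a maximal one via Dritschel--McCullough. The only subtle point, and the reason this needs an ``extra argument'' beyond the injective-envelope approach, is to verify that the dilation remains completely isometric, so that the ``maximal representation'' identification $\ca(\rho(\A)) \simeq \cenv(\A)$ can be invoked.

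First, let $j : \A \to \cenv(\A)$ be the canonical embedding, and choose any faithful unital $*$-representation $\tau : \cenv(\A) \to B(\H_0)$ (one exists by the universal representation of a $\ca$-algebra). Set $\sigma \equiv \tau \circ j$. Then $\sigma$ is a unital completely isometric representation of $\A$. By the Dritschel--McCullough theorem, $\sigma$ admits a unital maximal dilation $\pi : \A \to B(\H)$, with $\H \supseteq \H_0$ and $P_{\H_0} \pi(\cdot)|_{\H_0} = \sigma$.

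Next, I would verify that $\pi$ is completely isometric. For any matrix $[a_{ij}] \in M_n(\A)$, one has
\[
\nor{[a_{ij}]} = \nor{[\sigma(a_{ij})]} = \nor{P_{\H_0^{(n)}} [\pi(a_{ij})]|_{\H_0^{(n)}}} \le \nor{[\pi(a_{ij})]} \le \nor{[a_{ij}]},
\]
where the last inequality uses that $\pi$ is completely contractive. Hence equality holds throughout, showing that $\pi$ is completely isometric.

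Finally, since $\pi$ is a unital completely isometric maximal representation of $\A$, the ``maximal representation'' characterization of the $\ca$-envelope recalled in the paragraph preceding the lemma yields $\bigl(\ca(\pi(\A)), \pi\bigr) \simeq \cenv(\A)$. In other words, by the unique extension property, $\pi$ extends to a $*$-homomorphism $\tilde\pi : \cenv(\A) \to \ca(\pi(\A)) \subseteq B(\H)$ which implements this isomorphism, and is therefore a faithful $*$-representation of $\cenv(\A)$ extending $\pi$. The hard part, as indicated above, is not the existence of the maximal dilation (provided by Dritschel--McCullough) nor the $\ca$-envelope identification (already reviewed), but the short compression argument that transports the complete isometry of $\sigma$ up to $\pi$; without it, one could not invoke the maximal-representation characterization of $\cenv(\A)$.
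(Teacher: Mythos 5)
Your proof is correct, but it takes a genuinely different route from the paper's. You start from a faithful representation of $\cenv(\A)$, restrict it to $\A$, dilate to a maximal $\pi$ via Dritschel--McCullough, verify by the compression sandwich that $\pi$ remains completely isometric, and then obtain faithfulness of the $*$-extension from the maximal-dilation characterization $\bigl(\ca(\pi(\A)),\pi\bigr)\simeq\cenv(\A)$ combined with the uniqueness of the multiplicative unital completely positive extension guaranteed by maximality. The paper instead works inside an arbitrary $\ca$-cover $\C=\ca(\A)$: it takes a completely isometric maximal representation $\phi$ of $\A$, extends it to a $*$-representation of $\C$ by the unique extension property, identifies $\J=\ker\phi$ with the Shilov ideal, and defines $\pi$ as the induced representation of $\C/\J\simeq\cenv(\A)$, which is faithful by construction. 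Your route avoids the Shilov ideal altogether, at the cost of the (easy) complete-isometry check and the uniqueness argument; the paper's route gets injectivity of the extension for free from the quotient construction, at the cost of invoking the fact that the kernel of the $*$-extension of a maximal representation is exactly the Shilov ideal. One minor remark: the compression argument you flag as the ``hard part'' is only needed to certify that $\pi$ itself is completely isometric (which is part of the statement to be proved); the identification $\ca(\pi(\A))\simeq\cenv(\A)$ already applies to any maximal dilation of the completely isometric $\sigma$, and the genuinely delicate point --- the one the paper singles out --- is the faithfulness of the extension, which you do handle correctly via the uniqueness of the extension.
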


\begin{proof}
By the  Dritschel and McCullough result \cite{DrMc} such a unital completely isometric maximal representation $\pi : \A \rightarrow B(\H)$ always exists. The issue here is proving that its extension is a \textit{faithful} representation of $\cenv(\A)$. Consider $\A$ as a subalgebra of a $\ca$-algebra $\C = \ca(\A)$. Let $\phi : \A \rightarrow B(\H)$ be a maximal representation and extend $\phi$ to a $*$-representation of $\C$. Let $\J = \ker \phi$; this is the Shilov ideal of $\A$. If $q: \C \rightarrow \C\slash \J $ is the quotient map, then the desired $\pi$ is the  map that makes the following diagram
\[
\xymatrix{& \C \slash \J \ar[dr]^{\pi}& \\
\C=\ca(\A) \ar[ur]^{q} \ar[rr]_{\phi}  & & B(\H)}
\]
commutative.
\end{proof}

We also need the following result. Part (i) is elementary. Part (ii) appears as Proposition 4.4 in \cite{Arvhyp2}. The reader will not find it difficult to fill in the details.

\begin{lemma} \label{maximalfacts}
Let $\A$ be a unital operator algebra.
\begin{itemize}
\item[(i)] If $\pi : \A \rightarrow B(\H)$ is a maximal map and $\alpha \in \Aut \A$ a completely isometric automorphism then $\pi \circ \alpha$ is also maximal.
\item[(ii)] If $\pi_i : \A \rightarrow B(\H)$, $ i \in \bbI$, are maximal maps, then $\oplus_{i \in \bbI} \pi_i$ is also maximal.
\end{itemize}
\end{lemma}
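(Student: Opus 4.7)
The plan is to argue both parts directly from the definition of maximality (no nontrivial dilations), without appealing to the equivalence with the unique extension property. The common mechanism is that compression commutes with both composition by an automorphism and the taking of direct sums, so the trivial-dilation condition is preserved by both operations in question.

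For (i), I would start with an arbitrary dilation $\rho: \A \to B(\K)$ of $\pi \circ \alpha$ and form the composition $\rho \circ \alpha^{-1}$. Since $\alpha^{-1}\in \Aut\A$ is completely isometric, $\rho \circ \alpha^{-1}$ is still completely contractive, and compressing to $\H$ gives $P_{\H}\, (\rho \circ \alpha^{-1})(a)|_{\H} = \pi \circ \alpha \circ \alpha^{-1}(a) = \pi(a)$, so $\rho \circ \alpha^{-1}$ is a dilation of $\pi$. The maximality of $\pi$ then forces $(\rho \circ \alpha^{-1})(\A)$ to reduce $\H$; but $\alpha^{-1}(\A) = \A$, so $(\rho \circ \alpha^{-1})(\A) = \rho(\A)$, and therefore $\rho(\A)$ reduces $\H$, meaning $\rho$ is a trivial dilation of $\pi \circ \alpha$.

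For (ii), write $\pi = \oplus_{i \in \bbI} \pi_i$ acting on $\H = \oplus_i \H_i$ and let $\rho: \A \to B(\K)$ be a dilation of $\pi$, so $P_{\H}\, \rho(a)|_{\H} = \pi(a)$. For each $i$, further compressing to the subspace $\H_i \subseteq \H \subseteq \K$ and using $P_{\H_i} = P_{\H_i} P_{\H}$ yields $P_{\H_i}\, \rho(a)|_{\H_i} = P_{\H_i}\, \pi(a)|_{\H_i} = \pi_i(a)$, so $\rho$ is a dilation of $\pi_i$. Maximality of each $\pi_i$ forces $\rho(\A)$ to reduce every $\H_i$, hence to reduce their closed linear span $\H$, so $\rho$ is a trivial dilation of $\pi$.

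I do not expect a genuine obstacle in either part; both reductions are essentially one-line consequences of the definition. The only fiddly point worth being alert to is the identification of $\H_i$ with its image inside the dilation space $\K$ in part (ii), which is handled by the projection identity $P_{\H_i} = P_{\H_i} P_{\H}$ noted above.
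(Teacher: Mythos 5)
Your argument is correct, and it fills in exactly what the paper leaves out: the paper offers no proof of this lemma, declaring part (i) ``elementary'' and referring part (ii) to Arveson's Proposition 4.4 in \cite{Arvhyp2}, whose argument is usually run through the equivalence of maximality with the unique extension property. You instead work directly with the definition of maximality via trivial dilations, which is self-contained and arguably closer to what ``elementary'' should mean here. Both of your reductions are sound: in (i), $\rho\circ\alpha^{-1}$ is a unital completely contractive dilation of $\pi$ (note $\alpha^{-1}(1)=1$ since an algebra automorphism preserves the unit), and $\rho(\alpha^{-1}(\A))=\rho(\A)$ transports triviality back; in (ii), the identity $P_{\H_i}=P_{\H_i}P_{\H}$ together with the fact that $\H_i$ reduces $\pi(\A)$ shows $\rho$ dilates each $\pi_i$, and since $P_{\H}=\sotsum_i P_{\H_i}$ with the $\H_i$ mutually orthogonal, commutation of $\rho(a)$ with every $P_{\H_i}$ gives commutation with $P_{\H}$, i.e.\ $\H$ reduces $\rho(\A)$. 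The only thing you gain from Arveson's route that yours does not immediately give is the converse direction of (ii) (each summand of a maximal direct sum is maximal), but the lemma as stated does not require it.
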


We have arrived at one of the central results of the paper. Its proof is partly inspired by the proof of \cite[Theorem 3.1]{Dun1}. Note that in the case where $\G$ is amenable, the result below  resolves \cite[Problem 1]{KR}.
 \begin{theorem} \label{discrenv}
 Let $(\A, \G, \alpha)$ be a discrete dynamical system and assume that $\A$ has a contractive approximate unit $\{ e_i\}_{i \in \bbI}$ consisting of selfadjoint operators. Then
 \[
 \cenv(\A \cpr) \simeq \cenv(\A) \cpr.
 \]
  In particular if $\G$ is discrete and amenable then
  \[
 \cenv(\A \cpf) \simeq \cenv(\A) \cpf.
 \]
 \end{theorem}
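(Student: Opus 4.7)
The plan is to reduce to the unital case by unitization, prove the unital case via a regular covariant representation that turns out to be maximal on $\A \cpr$, and deduce the amenable statement as a corollary. I would first assume $\A$ is unital. By Lemma~\ref{specialmax}, fix a unital completely isometric maximal representation $\pi \colon \A \to B(\H)$ that extends to a faithful $*$-representation of $\cenv(\A)$, and form the integrated form $\tilde{\pi}\times\lambda$ of the regular covariant representation of $(\cenv(\A), \G, \alpha)$ on $\H \otimes \ell^{2}(\G)$. This is a faithful $*$-representation of $\cenv(\A) \cpr$, and its restriction to $\A \cpr$ is completely isometric; moreover, unitality of $\A$ gives $\lambda_g = (\tilde{\pi}\times\lambda)(u_g)$ in the image, so the $\ca$-algebra generated by $(\tilde{\pi}\times\lambda)(\A \cpr)$ is all of $(\tilde{\pi}\times\lambda)(\cenv(\A) \cpr) \simeq \cenv(\A) \cpr$. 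Once we show that this restriction is a maximal representation of $\A \cpr$, the maximal-representation description of $\cenv$ gives $\cenv(\A \cpr) \simeq \cenv(\A) \cpr$.

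For the maximality step, suppose $\tau \colon \A \cpr \to B(\K)$ is a maximal dilation of $(\tilde{\pi}\times\lambda)|_{\A \cpr}$, with $\H \otimes \ell^{2}(\G) \subseteq \K$. The restriction $\tilde{\pi}|_{\A} = \bigoplus_{g \in \G} \pi \circ \alpha_{g^{-1}}|_{\A}$ is a direct sum of maximal maps (Lemma~\ref{maximalfacts}(i) applied to each $\alpha_{g^{-1}}$), and Lemma~\ref{maximalfacts}(ii) then yields maximality of the sum. Thus $\tau|_{\A}$ dilates a maximal representation, forcing $\H \otimes \ell^{2}(\G)$ to reduce $\tau(\A)$. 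Now, because $\A$ is unital we have $u_g \in \A \cpr$, so $\tau(u_g)$ is a well-defined contraction; the identities $u_g u_{g^{-1}} = 1 = u_{g^{-1}} u_g$ give $\tau(u_g)\tau(u_{g^{-1}}) = I = \tau(u_{g^{-1}})\tau(u_g)$, whence $\tau(u_g)$ is a contraction with a contractive inverse, i.e.\ a unitary on $\K$. Its compression $P_{\H \otimes \ell^{2}(\G)}\, \tau(u_g)\,|_{\H \otimes \ell^{2}(\G)} = \lambda_g$ is itself unitary, and a unitary whose compression to a closed subspace is again unitary must have that subspace as a reducing subspace. Hence $\H \otimes \ell^{2}(\G)$ reduces $\tau$ on every generator of $\A \cpr$, so the dilation is trivial.

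For a non-unital $\A$, I would pass to the unitization $\A^{1} = \A + \bbC 1$, with $\alpha$ extended by $\alpha_g(1) = 1$. This is exactly where the self-adjoint c.a.u.\ hypothesis enters: since $\{e_i\}$ is self-adjoint, one readily checks $e_i a^* = (a e_i)^* \to a^*$ for $a \in \A$, so $\{e_i\}$ is a c.a.u.\ for the $*$-algebra generated by $\A$ and therefore for $\cenv(\A)$ itself. Consequently $\cenv(\A)$ sits as a closed two-sided ideal in $\cenv(\A^{1})$ and $\cenv(\A) \cpr$ as an ideal in $\cenv(\A^{1}) \cpr$. Applying the unital case to $\A^{1}$ gives $\cenv(\A^{1} \cpr) \simeq \cenv(\A^{1}) \cpr$, and the non-unital description of the $\ca$-envelope identifies $\cenv(\A \cpr)$ with the $\ca$-subalgebra of $\cenv(\A^{1} \cpr)$ generated by $\A \cpr$; a short check using $\ca(\A) = \cenv(\A)$ shows that this $\ca$-subalgebra is exactly $\cenv(\A) \cpr$. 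The amenable statement then follows because amenability of $\G$ collapses full and reduced crossed products at the $\ca$-level, which combined with \cite[Theorem 3.12]{KR} implies $\A \cpf = \A \cpr$.

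The main obstacle I expect is the unitarity of $\tau(u_g)$, and more generally getting at the generators $u_g$ via elements of $\A \cpr$. This is precisely why the reduction to the unital case --- and hence the role of the self-adjoint c.a.u., which makes the unitization well-behaved at the level of $\ca$-envelopes --- is essentially forced upon us. Everything else is either a routine dilation computation or a direct application of Lemmas~\ref{specialmax} and~\ref{maximalfacts}.
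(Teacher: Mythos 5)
Your unital argument is correct and is a legitimate variant of the paper's: where the paper verifies the unique extension property of the regular representation by a multiplicative-domain computation, you get maximality directly from the unitarity of $\tau(u_g)$, and both routes work precisely because $u_g \in \A\cpr$ when $\A$ is unital. The problem is your non-unital reduction, which is where the real difficulty of the theorem lives. When $\A$ is non-unital, $(\A\cpr)^1 = \A\cpr + \bbC 1$ is strictly smaller than $\A^1\cpr$, which contains $u_g$ for every $g$; likewise $(\cenv(\A)\cpr)^1$ is strictly smaller than $\cenv(\A^1)\cpr = \cenv(\A)^1\cpr$, since the unitaries $u_g$ do not lie in $\cenv(\A)\cpr$ ($e_iu_g$ does not converge in norm). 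The non-unital description of the $\ca$-envelope requires you to compute $\cenv\big((\A\cpr)^1\big)$, but applying the unital case to $\A^1$ computes $\cenv(\A^1\cpr)$ --- the envelope of a genuinely larger unital algebra --- and there is no general principle that reads off the $\ca$-envelope of a unital subalgebra (equivalently, the vanishing of its Shilov ideal inside the smaller $\ca$-cover $(\cenv(\A)\cpr)^1$) from the envelope of an algebra containing it. The ``short check'' you defer is exactly the missing content.

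The paper closes this gap with an approximate-unit/multiplicative-domain argument, and this is where the selfadjointness hypothesis actually earns its keep: since $e_i = e_i^*$, the $\ca$-algebra $\ca(e_i\G)$ generated by the products $e_iu$, $u\in\ca_r(\G)$, satisfies $\ca(e_i\G)^1\subseteq(\A\cpr)^1$. Hence any unital completely positive extension $\rho$ of the regular representation restricted to $(\A\cpr)^1$ agrees with a $*$-homomorphism on this $\ca$-subalgebra, so $\ca(e_i\G)^1$ lies in the multiplicative domain of $\rho$ and $\rho(ce_iu)=\rho(c)\rho(e_iu)$ for all $c\in\cenv(\A)$; letting $i$ run (using that $\{e_i\}$ remains an approximate unit for $\cenv(\A)$) pins $\rho$ down on all of $(\cenv(\A)\cpr)^1$ and yields the unique extension property there. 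Some such device for reaching $cu$ through the elements $ce_iu$ is unavoidable; your proposal correctly identifies the obstacle (``getting at the generators $u_g$ via elements of $\A\cpr$'') but unitization does not remove it.
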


\begin{proof}
Let $\A^1$ be the unitization of $\A$. By Lemma~\ref{specialmax} there exists a faithful $*$-representation $\pi: \cenv(\A^1) \rightarrow B(\H)$  whose restriction $\pi\mid_{\A^1}$ on $\A^1$ is a maximal map and therefore satisfies the unique extension property.  Let $( \overline{\pi}, \lambda_{\H})$ be the regular covariant representation for $(\cenv(A^1), \G, \alpha)$ corresponding to $\pi$. 

We claim that the restriction  $\overline{\pi} \rtimes \lambda_{\H}\mid_{(\A\cpr)^1}$ of  $$\overline{\pi} \rtimes \lambda_{\H}: \cenv(\A^1) \cpr \longrightarrow B\big(\H \otimes l^2(\G)\big)$$ satisfies the unique extension property.

Indeed start by noticing that $\overline{\pi} = \oplus_{g \in \G} \pi\circ \alpha_g$ and so Lemma~\ref{maximalfacts} implies that $\overline{\pi}$ is a maximal representation of $\A^1$. Now let $\ca\big( (\A\cpr)^1\big)$ denote the $\ca$-subalgebra of $\cenv(\A^1) \cpr $ generated by $(\A\cpr)^1$ and note that
\[
\ca\big( (\A\cpr)^1\big)= \ca\big( \A\cpr\big) ^1= \big( \cenv(\A) \cpr\big)^1.
\]
Let
\[
\rho\colon \ca\big( (\A\cpr)^1\big) \longrightarrow  B\big(\H \otimes l^2(\G)\big)
\]
be a completely positive map extending $\overline{\pi} \rtimes \lambda_{\H}\mid_{(\A\cpr)^1}$. Since $\A^1\subseteq (\A\cpr)^1$ and $$\rho\mid_{\A^1}= \overline{\pi} \rtimes \lambda_{\H}\mid_{\A^1}=\overline{\pi}\mid_{\A^1},$$ the unique extension property of $\overline{\pi}$ implies that $\rho(c) =\overline{\pi}(c)$ for any $c \in \cenv(\A^1)= \cenv(\A)^1\subseteq  \cenv(\A^1) \cpr$.

On the other hand, let $\ca( e_i\G)$ denote the $\ca$-algebra generated by all products of the form $e_i u$, with $i \in \bbI$ and $u \in \ca_{r}(\G) \subseteq  \cenv(\A^1) \cpr $. Since $\ca( e_i\G)^1 \subseteq (\A \cpr)^1$, we have $\rho(e_iu) = \overline{\pi} \rtimes \lambda_{\H}(e_iu)$. Furthermore, the containment $\ca( e_i\G)^1 \subseteq (\A \cpr)^1$ implies that $\rho$ is multiplicative on $\ca( e_i\G)^1$. Hence from \cite[1.3.12]{BlLM} or \cite[Theorem 3.18]{Paulsen}  we have  $$\rho(ce_iu)=\rho(c)\rho(e_iu) =  \overline{\pi} (c) \overline{\pi} \rtimes \lambda_{\H}(e_iu)= \overline{\pi} \rtimes \lambda_{\H}(ce_iu)$$ for all $c \in \cenv(\A)$. However $\{e_i\}_{i \in \bbI}$ is also an approximate unit for $\cenv(\A)$ and so $\rho(cu) =  \overline{\pi} \rtimes \lambda_{\H}(cu)$, for all $c \in \cenv(\A)$ and $u \in \ca_r(\G)$. This suffices to prove the claim.

\vspace{.1in}

Since $\overline{\pi} \rtimes \lambda_{\H}\mid_{(\A\cpr)^1}$ satisfies the unique extension property and its unique extension $\overline{\pi} \rtimes \lambda_{\H}$ on
$\ca\big( (\A\cpr)^1\big)= \big( \cenv(\A) \cpr\big)^1$ is faithful, we conclude that
\[
\cenv\big((\A \cpr )^1\big)\simeq \big( \cenv(\A) \cpr\big)^1.
\]
Since the $\ca$-algebra generated by $\A \cpr \subseteq \big( \cenv(\A) \cpr\big)^1$ equals $ \cenv(\A) \cpr$, we are done.
\end{proof}

 We would like to extend the previous result to the full crossed product. We do this for a large class of operator algebras which we now define.

 \begin{definition} \label{defn;hyperrigid}
 An operator algebra $\A$ is said to be \textit{hyperrigid} iff the restriction of any non-degenerate $*$-representation of $\cenv(\A)$ to $\A$ is a maximal representation. 
 \end{definition}

Since the definition of hyperrigidity allows for non-unital operator algebras, we need to explain what we mean by a maximal map in this case. A completely contractive map $\pi: \A \rightarrow B(\H)$ is said to be maximal iff all of its dilations reduce $\overline{\pi(\A)(\H)}$. Note that the subspace $\overline{\pi(\A)(\H)}$ is reducing for $\pi(\A)$ because $\A$ has a contractive approximate unit. 

The term ``hyperrigid" originates from Arveson's paper \cite{Arvhyp2} but the concept itself has already appeared in earlier works, at least as early as \cite{MS}. Arveson approaches hyperrigidity as a property of the inclusion of an operator algebra $\A$ inside a specific $\ca$-cover. Nevertheless, whenever hyperrigidity happens that $\ca$-cover has to be the $\ca$-envelope of $\A$.  Also note that our Definition \ref{defn;hyperrigid} coincides with what Arveson proves in \cite[Theorem 2.1(iii)]{Arvhyp2} as an equivalent formulation of hyperrigidity. Note here that for the verification of the  hyperrigidity of an operator algebra $\A$, we only need to examine restrictions of \textit{faithful} $*$-representation of $\cenv(\A)$.

 The list of hyperrigid algebras includes some of the fundamental examples of the theory. All Dirichlet operator algebras are hyperrigid. Peters' semicrossed products of separable $\ca$-algebras \cite{Pet} and the quiver algebras of Muhly and Solel for row finite graphs  \cite{MS} are all hyperrigid algebras; see \cite{Dun1, Kak} for a proof. Also the tensor algebras of automorphic multivariable systems are hyperrigid \cite{Kak}.
Direct limits and free products with amalgamation of hyperrigid algebras are also seen to be hyperrigid \cite{DavFK, Dun1}. Finally with Theorem~\ref{hr} we add more algebras to the list of hyperrigid algebras. 

 \begin{theorem} \label{discrenvful}
 Let $(\A, \G, \alpha)$ be a discrete dynamical system and assume that $\A$ has a contractive approximate unit consisting of selfadjoint operators. If $\A$ is hyperrigid, then
 \[
 \cenv\big(\A\rtimes_{\cenv(\A), \alpha} \G \big) \simeq \cenv(\A) \cpf.
 \]
 \end{theorem}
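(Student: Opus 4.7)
The plan is to mirror the proof of Theorem~\ref{discrenv} but to use hyperrigidity of $\A$ to supply the key maximality input, rather than constructing a specific regular covariant representation by hand. First I would fix a faithful non-degenerate $*$-representation $\sigma\colon \cenv(\A)\cpf \to B(\H)$, which is automatically the integrated form of a covariant pair $(\pi, U)$ for $(\cenv(\A), \G, \alpha)$. Non-degeneracy of $\sigma$, combined with the fact that the selfadjoint cau $\{e_i\}$ of $\A$ remains an approximate unit for $\cenv(\A)$, forces $\pi$ to be a non-degenerate $*$-representation of $\cenv(\A)$, so Definition~\ref{defn;hyperrigid} makes $\pi|_{\A}$ maximal.

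Next I would pass to the unital extension $\sigma^1\colon (\cenv(\A)\cpf)^1 \to B(\H)$ and show that its restriction to $(\A \rtimes_{\cenv(\A), \alpha} \G)^1$ has the unique extension property inside the ambient $\ca$-algebra, which equals $(\cenv(\A)\cpf)^1$ because $\A \rtimes_{\cenv(\A), \alpha} \G$ contains both $\A$ (whose $\ca$-hull is $\cenv(\A)$) and the unitaries $u_g$. Let $\rho\colon (\cenv(\A)\cpf)^1 \to B(\H)$ be any ucp extension. Since $\A^1 \subseteq (\A \rtimes_{\cenv(\A), \alpha} \G)^1$, we have $\rho|_{\A^1} = \sigma^1|_{\A^1}$, namely the unital extension of $\pi|_{\A}$; maximality of $\pi|_{\A}$ upgrades to the unique extension property on $\cenv(\A)^1 \simeq \cenv(\A^1)$, so $\rho|_{\cenv(\A)^1} = \sigma^1|_{\cenv(\A)^1}$.

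To extend the agreement to all of $(\cenv(\A)\cpf)^1$, I would invoke precisely the multiplicative-domain trick used in Theorem~\ref{discrenv}. Each $e_i u_g$ and its adjoint $\alpha_{g^{-1}}(e_i) u_{g^{-1}}$ lies in $\A \rtimes_{\cenv(\A), \alpha} \G$, so the $\ca$-algebra generated by these products together with the unit sits inside $(\A \rtimes_{\cenv(\A), \alpha} \G)^1$. On this $\ca$-subalgebra $\rho$ is a cp homomorphism, hence a $*$-homomorphism, and is therefore contained in the multiplicative domain of $\rho$. For any $c \in \cenv(\A)$,
\[
\rho(c e_i u_g) = \rho(c)\rho(e_i u_g) = \sigma^1(c)\sigma^1(e_i u_g) = \sigma^1(c e_i u_g),
\]
and letting $e_i \to 1$ strictly gives $\rho(c u_g) = \sigma^1(c u_g)$. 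Since such products densely span $\cenv(\A)\cpf$, this forces $\rho = \sigma^1$. Faithfulness of $\sigma^1$ then yields $\cenv\big((\A \rtimes_{\cenv(\A), \alpha} \G)^1\big) \simeq (\cenv(\A)\cpf)^1$, and the non-unital reduction discussed earlier in the paper gives the desired conclusion.

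The main obstacle I anticipate is the passage between the non-unital form of maximality in Definition~\ref{defn;hyperrigid} and the unique extension property of $\pi|_{\A^1}$ on the unital $\ca$-cover $\cenv(\A)^1$; this upgrade is largely formal but relies on the identification $\cenv(\A)^1 \simeq \cenv(\A^1)$ under the selfadjoint-cau hypothesis, and on the standard equivalence between unital maximality and the unique extension property. Once that point is verified, the rest of the argument parallels Theorem~\ref{discrenv} line by line, with the simplification that hyperrigidity removes the need to exhibit, by hand, a maximal representation of $\A$ compatible with the action $\alpha$.
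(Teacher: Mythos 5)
Your proposal is correct and follows essentially the same route as the paper's proof: hyperrigidity yields maximality of $\pi\mid_{\A}$ for the covariant pair underlying a faithful representation of $\cenv(\A)\cpf$, the unitization step upgrades this to the unique extension property of $\pi^1\mid_{\A^1}$ via non-degeneracy of $\pi$, and the multiplicative-domain argument with the elements $e_iu_g$ propagates the agreement to all of $(\cenv(\A)\cpf)^1$. The only cosmetic differences are that the paper works with the universal covariant representation and treats the unital and non-unital cases separately, whereas you handle both uniformly through an arbitrary faithful non-degenerate representation; the "main obstacle" you flag is resolved exactly as you anticipate, using $\overline{\pi(\A)\H}=\H$.
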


 \begin{proof} Assume first that $\A$ s unital. Let $(\pi, u)$ be the universal covariant representation of $(\cenv(\A), \G, \alpha)$. We claim that the restriction of $\pi\rtimes u$ to $\A\rtimes_{\cenv(\A), \alpha} \G$ has the unique extension property. This will suffice to prove the result in the unital case.

  The proof now is similar to that of Theorem~\ref{discrenv}. Indeed let
\[
\rho \colon \cenv(\A) \cpf \longrightarrow B(\H)
\]
be a completely contractive map agreeing with $\pi \rtimes u$ on $\A\rtimes_{\cenv(\A), \alpha} \G  $. Note that the restriction of $\pi$ on $\A$ is a maximal map because $\A$ is hyperrigid and so it has the unique extension property. Therefore $\pi(c) = \rho(c)$, for any $c \in \cenv(\A)$. Also $\rho(u_g)= \pi\rtimes u (u_g)$, for all $g \in \G$. Hence an application of \cite[1.3.12]{BlLM}, as in the proof of Theorem~\ref{discrenv}, proves that $\rho$ and $\pi$ agree on all of $\cenv(\A) \cpf$ and we are done.

For the non-unital case, let $(\pi, u)$ be the universal covariant representation of $\cenv(\A) \cpf$ acting on a Hilbert space $\H$; we may assume that $\pi$ is non-degenerate. Since $\A$ is hyperrigid, $\pi\mid_{\A}$ is a maximal map.

 If $\pi^1: \cenv(\A)^1\rightarrow B(\H)$ denotes the unitization of $\pi$, then we claim that $\pi^1\mid _{\A^1}$ is also maximal. Indeed if $\rho$ is a dilation of $\pi^1\mid _{\A^1}$, then the maximality of $\pi\mid_{\A}$ implies that $\rho(\A)$ (and so $\rho(\A^1)$) reduces $\overline{\pi(\A)(\H)}$. However $\pi$ is non-degenerate and since $\A$ has a contractive approximate unit, $\pi\mid_{\A}$ is also non-degenerate, i.e., $\overline{\pi(\A)(\H)} = \H$.  Hence $\rho(\A^1)$ reduces $\H$ and so $\pi^1\mid _{\A^1}$ is maximal as a \textit{unital} map. 
 
 The rest of the proof follows now familiar lines. Since $\pi^1\mid _{\A^1}$ is a maximal map, it has the unique extension property. Arguing as in the unital case above, one can show that $\pi^1\rtimes u\mid_{\A^1 \rtimes_{\cenv(\A), \alpha} \G}$ is a maximal map. Subsequently an argument identical to that of the claim in the proof of Theorem~\ref{discrenv} shows that $\pi^1\rtimes u \mid_{(\A \rtimes_{\cenv(\A), \alpha} \G)^1}$ is also maximal. This suffices to show that 
 \[
  \cenv\big(\A\rtimes_{\cenv(\A), \alpha} \G \big)^1 \simeq \big(\cenv(\A) \cpf\big)^1
  \]
  and the conclusion follows.
\end{proof}


\section{The Hao-Ng isomorphism for discrete groups}

Let $(X, \C, \phi_X)$ be a non-degenerate $\ca$- correspondence over a $\ca$-algebra $\C$. (Whenever there is no source of confusion, the symbol $\phi_X$ will be suppressed.) There exist two important $\ca$-algebras associated with $(X, \C)$: the Cuntz-Pimsner $\ca$-algebra $\O_X$ and the Cuntz-Pimsner-Toeplitz $\ca$-algebra $\T_X$. Both are generated as $\ca$-algebras by a (unitarily equivalent) copy of $(X,C)$ that they contain; in general these algebras are not isomorphic. (See \cite{Lance} for the basics on $\ca$-correspondences and \cite{Kat, Katsura} for the precise definitions of $\O_X$ and $\T_X$.)

There is also a non-selfadjoint operator algebra associated with $(X, \C)$, the \textit{tensor algebra} $\T_X^+$ of Muhly and Solel \cite{MS}. This is the \textit{non-selfadjoint} subalgebra of $\T_X$ generated by the ``natural" copy of $(X, \C)$ that it contains. As it turns out, $\T_X^+$ is completely isometrically isomorphic to the non-selfadjoint subalgebra of $\O_X$ generated by the copy of $(X, \C)$ contained naturally in $\O_X$. This is of importance in this paper as it allows us to move from the one $\ca$-algebra to the other by staying ``inside" the non-selfadjoint subalgebra. Equally important here is a result of Kribs and the author that identifies $\O_X$ as the $\ca$-envelope of $\T^+_X$ \cite[Theorem 3.7]{KatsoulisKribsJFA}. 

Let $(X, \C)$ be a non-degenerate $\ca$- correspondence over a $\ca$-algebra $\C$. Consider an action $\alpha: \G \rightarrow \Aut \T_X$ so that $\alpha_{g}(\C)= \C$ and $\alpha_{g}(X)= X$, for all $g \in \G$. We call such an action $\alpha$ a \textit{generalized gauge action}. (Note that we do not insist that the automorphisms $\alpha_g$ fix $\C$ or $X$ elementwise but instead that they fix them only as sets.) Clearly the action $\alpha$ restricts to a generalized gauge action $\alpha : \G \rightarrow \Aut \T^+_X$, which in turn extends to a generalized gauge action on $\O_X$, because $\cenv(\T_X^+) = \O_X$. The crossed product of $\O_X$ by such actions play an important role in $\ca$-algebra theory: in the case of a Cuntz or a Cuntz-Krieger $\ca$-algebra, examples of such actions are the so-called \textit{quasi-free} actions whose crossed products have been studied extensively \cite{EF, Katsura02, KK, LN}. 

Let $(X, \C)$ be a non-degenerate $\ca$-correspondence and let $\alpha: \G \rightarrow \Aut \T_X$ be a generalized gauge action. Let $\alpha_1$ and $\alpha_2$ be the restrictions of $\alpha$ on the faithful copies of $\C\subseteq \T_X$ and $\X\subseteq \T_X$ respectively. It is easy to see that we have now a dynamical system $(\C, \G, \alpha_1)$, a group action $\alpha_2: \G \rightarrow \Aut X$ so that  for any $g \in \G$, $x, y \in X$ and $c \in \C$, 
\begin{itemize}
\item[(i)] $\alpha_{1,g}\big(\sca{x,y}\big) =\sca{\alpha_{2, g}(x),\alpha_{2, g}(y)}$
\item[(ii)] $\alpha_{2,g}(xc)=\alpha_{2,g}(x)\alpha_{1,g}(c)$
\item[(iii)]$\alpha_{2,g}(\phi_X(c)x)=\phi_X(\alpha_{1,g}(c))\alpha_{2,g}(x)$,
\end{itemize}
under the appropriate identifications. In general, any pair $(\alpha_1, \alpha_2)$ with $(\C, \G, \alpha_1)$ a dynamical system, $\alpha_2: \G \rightarrow \Aut X$ a group homomorphism and  $(\alpha_1, \alpha_2)$ satisfying the above properties is called an \textit{action} of $\G$ on $(X,C)$. (See \cite[Definition 2.1]{HN}.) It is a consequence of the universality of $\T_X$ that \textit{any} action $(\alpha_1, \alpha_2)$ of $\G$ on $(X, \C)$ comes from a generalized gauge action $\alpha: \G \rightarrow \Aut \T_X$ exactly as above. (See also \cite[Lemma 2.6]{HN} for a similar statement with $\O_X$.) In the sequel we will not be distinguishing between the concept of an action of a discrete group $\G$ on a $\ca$-correspondence $(X,\C)$  and the concept of a generalized gauge action of $\G$ on the ambient $\ca$-algebras. We will simply talk about an action of $\G$ on $(X, \C)$ and we will write $\alpha\colon \G \rightarrow (X, \C)$. 

Let $\alpha \colon\G \rightarrow (X,\C)$ be a group action. We define a $\ca$-correspondence $(X\rtimes_{\alpha}^r \G, \C \rtimes_{\alpha}^{r} \G)$ as follows. Identify formal (finite) sums of the form $\sum_g x_g u_g$, $x_g \in X$, $g \in \G$, with their image in $\O_X \cpr$ under $\pi \rtimes \lambda$, where $\pi$ is a faithful representation of $\O_X$. We call the collection of all such sums $\big(X\rtimes_{\alpha}^r \G\big)_0$. This allows a left and right action on $\big(X\rtimes_{\alpha}^r \G\big)_0$ by $\big(\C \cpr \big)_0$, i.e., finite sums of the form $\sum_{g} c_g u_g \in \C\cpr $, simply by multiplication. The fact that $\alpha$ is a gauge action guarantees that
\[
\big(\C \cpr \big)_0  \big(X\rtimes_{\alpha}^r \G\big)_0 \big(\C \cpr \big)_0  \subseteq \big(X\rtimes_{\alpha}^r \G\big)_0.
\]
Equip $\big(X\rtimes_{\alpha}^r \G\big)_0$ with the $\big(\C \rtimes_{\alpha}^r \G\big)_0$-valued inner product $\sca{.,.}$ defined by $\sca{S, T} \equiv S^*T$, with $S, T \in \big(X\rtimes_{\alpha}^r \G\big)_0$. The completion of $\big(X\rtimes_{\alpha}^r \G\big)_0$ with respect to the norm coming from $\sca{.,.}$ becomes a $(\C\rtimes_{\alpha}^r \G)$-correspondence denoted as $X\rtimes_{\alpha}^r \G$. It is not difficult to see that our definition of the $\ca$-correspondence $(X\rtimes_{\alpha}^r \G, \C \rtimes_{\alpha}^{r} \G)$ coincides with the one appearing in the selfadjoint literature \cite[p. 1082]{BKQR}.

The following appears as Theorem 7.7 in our paper \cite{KR}. However, \cite[Theorem 7.7]{KR} is proven only for unital $\ca$-correspondences and its proof requirers some minor modifications to work in the general case. We included it here for completeness and for the reader's convenience.

\begin{theorem} \label{HNtensor2}
Let $\G$ be a discrete group acting on a non-degenerate $\ca$-correspondence $(X, \C)$. Then
\[
\T^+_{X} \cpr \simeq \T^+_{X\cpr}.
\]
Therefore, $$\cenv\big (\T^+_{X} \cpr \big) \simeq  \O_{X\cpr}.$$
\end{theorem}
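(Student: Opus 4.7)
The plan is to realize both $\T^+_X \cpr$ and $\T^+_{X\cpr}$ as the same concrete non-selfadjoint subalgebra of $\O_X \cpr$. The set-up combines two tools already at hand: the Katsoulis-Kribs identification $\cenv(\T^+_X) \simeq \O_X$, and Theorem~\ref{discrenv} applied to $\T^+_X$, which inherits a selfadjoint contractive approximate unit from $\C$. Together these yield $\cenv(\T^+_X \cpr) \simeq \O_X \cpr$, and in particular a completely isometric inclusion $\T^+_X \cpr \hookrightarrow \O_X \cpr$.

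With this embedding, Definition~\ref{relative} together with the fact that $\T^+_X$ is the closed non-selfadjoint algebra generated in $\O_X$ by $\C$ and $X$ identifies $\T^+_X \cpr$ with the closed non-selfadjoint subalgebra $\B$ of $\O_X \cpr$ generated by $\C \cpr$ and the subspace $X \cpr$ defined in the excerpt. On the other hand, the inclusions $\C \cpr \hookrightarrow \O_X \cpr$ and $X \cpr \hookrightarrow \O_X \cpr$ satisfy $\sca{S,T} = S^*T$ by construction and respect the module actions, since $\alpha$ is a generalized gauge action. They therefore form a Toeplitz representation of the correspondence $(X\cpr, \C\cpr)$, and by the universal property of the tensor algebra this yields a surjective completely contractive homomorphism
\[
\Phi \colon \T^+_{X\cpr} \longrightarrow \B \simeq \T^+_X \cpr.
\]

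The substantive step is to show that $\Phi$ is completely isometric. I would do this by exhibiting a common Fock model. The left regular covariant representation of $(\T_X, \G, \alpha)$ on $\F(X) \otimes \ell^2(\G)$, where $\F(X)$ is the Fock module of $X$, integrates to a faithful Fock representation of $\T^+_X \cpr$ realizing its embedding into $\O_X \cpr$. An equivariant identification of $\F(X) \otimes \ell^2(\G)$ with the $\C\cpr$-Fock module $\F(X\cpr)$ produces a faithful Fock representation of $\T^+_{X\cpr}$ on the same Hilbert space. Since the two representations agree on the generators $c_g u_g$ and $x_g u_g$ with $c \in \C$, $x \in X$, $g \in \G$, they agree throughout and force $\Phi$ to be a complete isometry. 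The corollary $\cenv(\T^+_X\cpr) \simeq \O_{X\cpr}$ is then immediate upon applying the functor $\cenv(\cdot)$ to $\T^+_X \cpr \simeq \T^+_{X\cpr}$ and invoking Katsoulis-Kribs once more.

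The main obstacle I anticipate is the equivariant identification $\F(X) \otimes \ell^2(\G) \simeq \F(X\cpr)$ in the non-unital, non-degenerate setting. In the unital case of \cite[Theorem 7.7]{KR}, matching the ``simple tensors'' $x_{g_1} u_{g_1} \otimes \cdots \otimes x_{g_n} u_{g_n}$ with their $X$-tensor counterparts is direct, but without a unit one has to approximate with the selfadjoint approximate unit $\{e_i\}$ and pass to the limit using continuity of module multiplication and the convergence of inner products $\sca{x e_i, y e_i} \to \sca{x,y}$. This is precisely the ``minor modification'' the authors alert the reader to.
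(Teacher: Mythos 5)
Your proposal is correct in outline and rests on the same underlying idea as the paper --- realize the two algebras concretely on a common Hilbert space and match them on generators --- but the decisive technical step is carried out by a genuinely different tool. The paper works inside the Toeplitz algebra $\T_X \cpr$ rather than $\O_X \cpr$: it takes a faithful representation $\rho$ of $\T_X$, forms the Toeplitz representation $c \mapsto \rho(c)\otimes I$, $x \mapsto \rho(x)\otimes V$ with $V$ the unilateral shift, passes to the regular representation of the crossed product, and observes that the induced isometric representation of $(X\cpr, \C\cpr)$ admits a gauge action (supplied by the middle factor $V$) and meets the hypotheses of Katsura's gauge-invariant uniqueness theorem \cite[Theorem 6.2]{Katsura}. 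That yields in one stroke a $*$-isomorphism $\T_{X\cpr} \simeq \T_X \cpr$ carrying $\T^+_{X\cpr}$ onto $\T^+_X \cpr$, with no Fock-module computation at all. Your route instead produces the completely contractive surjection $\Phi \colon \T^+_{X\cpr} \to \T^+_X \cpr$ from the universal property and must then prove it completely isometric by hand; the equivariant identification $\F(X)\otimes \ell^2(\G) \simeq \F(X\cpr)$ that you flag as the anticipated obstacle --- in particular $(X\cpr)^{\otimes n} \simeq X^{\otimes n}\cpr$ over $\C\cpr$ in the non-unital, non-degenerate setting --- is precisely the work that Katsura's theorem lets the paper sidestep, and it is the one substantive piece you leave as a sketch rather than a proof. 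Two smaller remarks: invoking Theorem~\ref{discrenv} to obtain the completely isometric inclusion $\T^+_X\cpr \hookrightarrow \O_X\cpr$ is heavier than necessary, since this is just the definition of the relative reduced crossed product together with the coincidence of all reduced relative crossed products from \cite{KR}; and your silent passage between the copy of $\T^+_X\cpr$ sitting inside $\O_X\cpr$ and the copy sitting inside $\T_X\cpr$ (where any Fock model must live) rests on that same coincidence --- the paper's remark following Theorem~\ref{HaoNgreduced} singles this out as the subtle point of the whole argument, so it deserves to be made explicit.
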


\begin{proof} Before embarking with the proof recall the concept and the associated notation regarding a \textit{regular} representation for the reduced crossed product $\T_X \cpr$: if $\pi$ is a $*$-representation of $\T_X$ on $\K$, then $\overline{\pi}$ will denote the representation
\[
\T_X \ni s \longmapsto \sot \sum_g \pi\big(\alpha_g^{-1}(s) \big)\otimes e_{g, g} \in B\big(\K \otimes l^2(\G)\big)
\]
($e_{p,q}$ denotes the rank-one isometry on $l^2(\G)$ that maps the basis vector $\xi_q$ on $\xi_p$, $p, q \in \G$) and $\overline{\pi}\rtimes \lambda$ will denote the associated regular representation i.e,, the integrated form of the covariant representation $(\overline{\pi}, \id\otimes \lambda)$, where $\lambda$ is the left regular representation of $\G$.

 Now with the proof. Because of \cite[Corollary 3.16]{KR} all relative reduced crossed products coincide and so we have flexibility in choosing which manifestation of $\T^+_X\cpr$ to work with. We choose the ``natural" one $\T^+_X \rtimes^r_{\T_X, \alpha}\G \subseteq \T_X \cpr$.

Notice that the $\ca$-algebra $\T_X$ contains a unitarily equivalent copy of the $\ca$-correspondence $(X, \C)$ and for the rest of the proof we envision  $(X, \C)$ as a subset of $\T_X$. Similarly the $\ca$-algebra $\T_X \cpr$ contains a (unitarily equivalent) copy of $(X\rtimes_{\alpha}^r \G, \C \rtimes_{\alpha}^{r} \G)$. Indeed $\T_X \cpr$ contains naturally a faithful copy of $\C \cpr$ and so the map
\[
\O_X \cpr \supseteq (X\rtimes_{\alpha}^r \G)_0 \ni \sum_g x_gu_g \longmapsto  \sum_g x_g u_g \in \T_X \cpr
\]
extends to a unitary equivalence of $\ca$-correspondences that embeds $(X\rtimes_{\alpha}^r \G, \C \rtimes_{\alpha}^{r} \G)$ inside $\T_X \cpr$.

Let $\rho \colon \T_X \rightarrow B(\H)$ be some faithful $*$-representation and let $V$ be the forward shift acting on $l^2(\bbN)$. The map
\begin{align*}
\C\ni &c \longmapsto \rho(c) \otimes I \in B(\H \otimes l^2(\bbN)) \\
X\ni &x \longmapsto \rho(x) \otimes V  \in B(\H \otimes l^2(\bbN))
\end{align*}
is a Toeplitz representation of $(X, \C)$ that admits a gauge action and and satisfies the requirements of Katsura's Theorem \cite[Theorem 6.2]{Katsura}. Therefore it establishes a faithful representation $\pi : \T_X \rightarrow B(\H \otimes l^2(\bbN))$.

Now view the regular representation $\overline{\pi} \rtimes \lambda_{\H \otimes l^2(\bbN) }$ as a representation of the $\ca$-correspondence $(X\rtimes_{\alpha}^r \G, \C \rtimes_{\alpha}^{r} \G)$. Since
\begin{align*}
\big( \C \rtimes_{\alpha}^{r} \G \big)_0 \ni \sum_{g} c_{g}u_{g} \ \ \longmapsto \ \ &(\overline{\pi} \rtimes \lambda )\big( \sum_{g} c_{g}u_{g} \big) \\
&=\sum_g\sum_h \rho\big(\alpha_h^{-1}(c_g) \big)\otimes I \otimes e_{h, g^{-1}h} \\
\big( X \rtimes_{\alpha}^{r} \G \big)_0 \ni \sum_{g} x_{g}u_{g} \ \ \longmapsto \ \ &(\overline{\pi} \rtimes \lambda )\big( \sum_{g} x_{g}u_{g} \big) \\
&=\sum_g\sum_h \rho\big(\alpha_h^{-1}(x_g) \big)\otimes V \otimes e_{h, g^{-1}h} ,
\end{align*}
the above extends to an isometric representation of $(X\rtimes_{\alpha}^r \G, \C \rtimes_{\alpha}^{r} \G)$ that admits a gauge action (because of the middle factor $V$) and satisfies the requirements of Katsura's Theorem \cite[Theorem 6.2]{Katsura}. Hence its integrated form is a canonical faithful representation of the Toeplitz-Cuntz-Pimsner algebra $\T_{X\cpr}$. In other words, if $(\pi_{\infty}, t_{\infty})$ is the universal Toeplitz representation of $(X\rtimes_{\alpha}^r \G, \C \rtimes_{\alpha}^{r} \G)$, then there exists $*$-isomorphism  
\[
\phi \colon \ca(\pi_{\infty}, t_{\infty}) \longrightarrow \overline{\pi}\rtimes \lambda\big( \T_X \cpr\big)
\]
satisfying 
\[
\phi \Big( \pi_{\infty}\big( \sum_{g} c_{g}u_{g} \big)\Big) = (\overline{\pi} \rtimes \lambda )\big( \sum_{g} c_{g}u_{g} \big) , \mbox{ for all }\sum_{g} c_{g}u_{g} 
 \in\big( \C \rtimes_{\alpha}^{r} \G \big)_0 
 \]
 and 
 \[
\phi \Big( t_{\infty}\big( \sum_{g} x_{g}u_{g} \big)\Big) =  (\overline{\pi} \rtimes \lambda )\big( \sum_{g} x_{g}u_{g} \big), \mbox{ for all }\sum_{g} x_{g}u_{g} 
 \in\big( X \rtimes_{\alpha}^{r} \G \big)_0 
 \]
Since $\pi$ is faithful, $\overline{\pi}\rtimes \lambda$ is a faithful representation of $\T_{ X}\cpr$ and so $(\overline{\pi} \rtimes \lambda)^{-1}\circ \phi$ establishes a $*$-isomorphism from $\ca(\pi_{\infty}, t_{\infty}) \simeq \T_{ X\cpr}$ onto $\T_X\cpr$ that maps $\T^+_{X\cpr}$ onto $\T^+_{X} \cpr$ in a canonical way. Hence $\T^+_{X} \cpr \simeq \T^+_{X\cpr}$.

Finally the isomorphism $\cenv\big (\T^+_{X} \cpr \big) \simeq  \O_{X\cpr}$ follows from
\cite[Theorem 3.7]{KatsoulisKribsJFA}, which implies the identification $\cenv(\T^+_{X \cpr}) \simeq \O_{X \cpr}$.
\end{proof}

We now use the above to obtain our generalization of the Hao-Ng Theorem applicable to all discrete groups. In the case where $\G$ is amenable our result below is just the Hao-Ng Theorem for discrete groups \cite[Theorem 2.10]{HN}.  In the case where $\G$ is an exact discrete group, the result below was obtained in \cite[Theorem 5.5]{BKQR} and was highlighted as one of the central results of that paper that actually explained a remark of Katsura appearing in \cite{HN}. In \cite{Kim} more general groups were allowed but with restrictions on the $\ca$-correspondences considered. We now remove all conditions, apart from the discreteness of $\G$.

\begin{theorem} \label{HaoNgreduced}
Let $\G$ be a discrete group acting on a non-degenerate $\ca$-correspondence $(X, \C)$. Then
\[
\O_X \cpr \simeq\O_{X\cpr}.
\]
  \end{theorem}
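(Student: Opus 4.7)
The plan is a short, direct combination of Theorem~\ref{discrenv} and Theorem~\ref{HNtensor2}, linked by the Katsoulis--Kribs identification $\cenv(\T^+_X) \simeq \O_X$. First I would set up the dynamical system $(\T^+_X, \G, \alpha)$: the action of $\G$ on $(X,\C)$ restricts to a generalized gauge action on $\T_X$, which in turn restricts to a completely isometric action on the non-selfadjoint tensor subalgebra $\T^+_X$, so we have a bona fide discrete dynamical system.

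Next I would verify the hypothesis of Theorem~\ref{discrenv} for $\A = \T^+_X$. Since $(X, \C)$ is non-degenerate, the natural copy of $\C$ sits inside $\T^+_X$ as a $\ca$-subalgebra, and a contractive approximate unit $\{e_i\}_{i \in \bbI}$ of selfadjoint elements of $\C$ acts non-degenerately on $X$ on both sides (by non-degeneracy of the left action $\phi_X$ and the inner-product structure of $X$ as a right Hilbert $\C$-module). Consequently $\{e_i\}_{i \in \bbI}$ is a selfadjoint contractive approximate unit for all of $\T^+_X$.

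Applying Theorem~\ref{discrenv} to $(\T^+_X, \G, \alpha)$ and combining with $\cenv(\T^+_X) \simeq \O_X$ would then yield
\[
\cenv\big(\T^+_X \cpr \big) \simeq \cenv(\T^+_X) \cpr \simeq \O_X \cpr.
\]
On the other hand, Theorem~\ref{HNtensor2} directly gives
\[
\cenv\big(\T^+_X \cpr \big) \simeq \O_{X \cpr}.
\]
Chaining the two isomorphisms produces $\O_X \cpr \simeq \O_{X \cpr}$, as required.

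There is essentially no serious obstacle beyond bookkeeping: the heavy lifting has already been done in Sections 2 and in the proof of Theorem~\ref{HNtensor2}. The only point that requires care is that the two $\ca$-envelopes of $\T^+_X \cpr$ produced above are canonically identified through the same embedding of $\T^+_X \cpr$, so that the composed $*$-isomorphism $\O_X \cpr \to \O_{X \cpr}$ intertwines the natural copies of $(X \cpr, \C \cpr)$; this follows from the concrete description of both isomorphisms (via the regular representation in Theorem~\ref{discrenv} and via Katsura's uniqueness theorem in Theorem~\ref{HNtensor2}), so no additional argument is needed.
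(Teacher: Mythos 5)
Your proposal is correct and follows essentially the same route as the paper: chaining $\cenv(\T^+_X\cpr)\simeq\cenv(\T^+_X)\cpr$ from Theorem~\ref{discrenv} with $\cenv(\T^+_X\cpr)\simeq\O_{X\cpr}$ from Theorem~\ref{HNtensor2}, via the identification $\cenv(\T^+_X)\simeq\O_X$. The subtlety you flag at the end (that the two envelopes arise from viewing $\T^+_X\cpr$ inside $\T_X\cpr$ versus inside $\O_X\cpr$) is exactly what the paper addresses in the remark following the theorem, where it is resolved by the canonical identification of all reduced relative crossed products.
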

  
  \begin{proof} 
  By Theorem~\ref{HNtensor2} we have $\cenv\big (\T^+_{X} \cpr \big) \simeq  \O_{X\cpr}$. On the other hand, Theorem~\ref{discrenv} implies that $\cenv\big (\T^+_{X} \cpr \big) \simeq \cenv (\T^+_{X}) \cpr  $. Hence $\cenv (\T^+_{X}) \cpr  \simeq \O_{X\cpr}$. Finally \cite[Theorem 3.7]{KatsoulisKribsJFA} shows that $\cenv (\T^+_{X})\break \simeq \O_X$ and we are done.
  \end{proof}

\begin{remark}
There is a subtle point in the proof of Theorem~\ref{HaoNgreduced} that should not go unnoticed. The isomorphism $\cenv\big (\T^+_{X} \cpr \big) \simeq  \O_{X\cpr}$ appearing in the proof of Theorem~\ref{HaoNgreduced} follows from considering $\T^+_X \cpr$ as a subalgebra of the crossed product $\ca$-algebra $\T_X\cpr$; this was actually mentioned in the proof of Theorem~\ref{HNtensor2}. On the other hand, an inspection of the proof of Theorem~\ref{discrenv} shows that the other isomorphism $\cenv\big (\T^+_{X} \cpr \big) \simeq \cenv (\T^+_{X}) \cpr  $ appearing in in the proof of Theorem~\ref{HaoNgreduced} follows by considering  $\T^+_X \cpr$ as a subalgebra of the crossed product $\ca$-algebra $\cenv(\T_X^+)\cpr \simeq \O_X\cpr$. Even though in general $\T_X\cpr$ and $\O_X \cpr$ are not isomorphic as $\ca$-algebras, it is the content of \cite[Theorem 3.12]{KR} that allows us to identify the two non-selfadjoint subalgebras discussed above.
\end{remark}

  We now want to obtain the analogous result for the full crossed product. We will not be able to do this for all $\ca$-correspondences but instead for a large class which includes all row-finite graph $\ca$-correspondences, Hilbert $\ca$-bimodules etc, which we now describe.
  
  \begin{definition}
  A $\ca$-correspondence $(X, \C)$ is said to be \textit{hyperrigid} if the corresponding tensor algebra $\T_X^+$ is hyperrigid as an operator algebra.
  \end{definition}
  
  Many of the algebras appearing below Definition~\ref{defn;hyperrigid} are actually tensor algebras of $\ca$-correspondences, thus providing examples of hyperrigid $\ca$-correspondences. In particular the Dirichlet tensor algebras coincide with the tensor algebras of Hilbert $\ca$-bimodules \cite{Kak}; hence the Hilbert $\ca$-bimodules are hyperrigid. Peters's semicrosed products and the quiver algebras of Muhly and Solel are tensor algebras of $\ca$-correspondences. (See \cite[Section 3]{Kat} for an detailed description of these $\ca$-correspondences. The later class of tensor algebras is associated with all graph $\ca$-correspondences.) Also the tensor algebras for multivariable systems are tensor algebras of $\ca$-correspondences \cite[Chapter 2, Section 3]{DavKatMem} and so in the automorphic case they generate new examples of hyperrigid $\ca$-correspondences. Additional examples of hyperrigid $\ca$-correspondences/tensor algebras have also appeared recently in the important works \cite{DavFK, KakS}.
  
  In order to provide our version of the Hao-Ng isomorphism for the full crossed product we also need a suitable $\ca$-correspondence; for that we follow \cite{KR}. Identify both $\big(X\rtimes_{\alpha} \G\big)_0$ and $\big( \C \cpf \big)_0$ with their natural images inside $\O_X \cpf$ this time. This allows again a left and right action on $\big(X\rtimes_{\alpha} \G\big)_0$ by $\big(\C \cpf \big)_0$ simply by multiplication. Equip $\big(X\rtimes_{\alpha} \G\big)_0$ with the $\C \hat{\rtimes}_{\alpha} \G$-valued inner product $\sca{.,.}$ defined by $\sca{S, T} \equiv S^*T$, $S, T \in \big(X\rtimes_{\alpha} \G\big)_0$, where $\C \hat{\rtimes}_{\alpha} \G$ denotes the $\ca$-subalgebra of $\O_X \cpf$ generated by $\big( \C \cpf \big)_0$. The completion of $\big(X\rtimes_{\alpha} \G\big)_0$ with respect to the norm coming from $\sca{.,.}$ becomes a $\C \hat{\rtimes}\G$-correspondence denoted as $X\hat{\rtimes}_{\alpha} \G$.
  
Finally we need an analogue of Theorem~\ref{HNtensor2} for the full crossed product. This appears as Theorem 7.7.(i) in \cite{KR}. This time the proof requires some extra arguments based on a result that we did coin as the Extension Theorem in \cite{KR}. We direct the reader to \cite{KR} for the proof and further details.
  
 \begin{theorem} \label{HNtensor1}
Let $\G$ be a discrete group acting on a non-degenerate $\ca$-correspondence $(X, \C)$. Then
 \[
\T^+_{X} \rtimes_{\O_X ,\alpha} \G \simeq \T^+_{X\cpd}\quad \mbox{  and  } \quad \cenv\big (\T^+_{X } \rtimes_{\O_X ,\alpha} \G \big) \simeq  \O_{X\cpd}
 \]
\end{theorem}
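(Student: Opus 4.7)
The plan is to parallel the proof of Theorem~\ref{HNtensor2}, replacing the regular covariant representation by a universal covariant representation $(\pi,u)$ of $(\O_X,\G,\alpha)$ so that $\pi\rtimes u$ faithfully implements $\O_X\cpf$ on a Hilbert space $\H$. The crucial first step is to build, on $\H\otimes\ell^2(\bbN)$, a Toeplitz representation of the correspondence $(X\cpd,\C\cpd)$ by the assignment
\[
\textstyle\sum_g c_g u_g \,\longmapsto\, (\pi\rtimes u)\bigl(\sum_g c_g u_g\bigr)\otimes I, \qquad \sum_g x_g u_g \,\longmapsto\, (\pi\rtimes u)\bigl(\sum_g x_g u_g\bigr)\otimes V,
\]
where $V$ is the forward shift on $\ell^2(\bbN)$. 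This representation admits a gauge action coming from rotation of $\ell^2(\bbN)$, and its coefficient representation is faithful by construction. Katsura's Gauge-Invariant Uniqueness Theorem \cite[Theorem 6.2]{Katsura} should then identify its integrated form as a faithful $*$-representation of the Toeplitz-Cuntz-Pimsner algebra $\T_{X\cpd}$.

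With this faithful representation in hand, I would argue that the canonical map
\[
\T^+_X\rtimes_{\O_X,\alpha}\G \;\longrightarrow\; \T^+_{X\cpd},
\]
sending the natural generators $\sum_g c_g u_g$ and $\sum_g x_g u_g$ on the left side to the corresponding generators on the right, is a completely isometric isomorphism. Both sides embed faithfully into $B(\H\otimes\ell^2(\bbN))$ with identical images: the right side through the Toeplitz representation built above, and the left side through the amplification $(\pi\rtimes u)\otimes I$ composed with the natural inclusion $\T^+_X\rtimes_{\O_X,\alpha}\G\subseteq\O_X\cpf$. The $\ca$-envelope statement then follows immediately from \cite[Theorem 3.7]{KatsoulisKribsJFA}, which yields $\cenv(\T^+_{X\cpd})\simeq\O_{X\cpd}$.

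The main obstacle, absent in the reduced case of Theorem~\ref{HNtensor2}, is verifying that the Toeplitz-type representation constructed on $\H\otimes\ell^2(\bbN)$ retains the \emph{full} crossed product structure on its coefficient $\ca$-algebra $\C\cpd$. In the reduced setting, the coincidence of all reduced crossed products \cite[Corollary 3.16]{KR} makes this automatic; here, instead, one must ensure that a universal covariant representation of the coefficient system $(\C,\G,\alpha)$ sitting inside $\O_X\cpf$ lifts canonically to a universal covariant representation of the ambient system after the Fock-space amplification, so that $\O_X\cpf$ itself is faithfully implemented. This is precisely the content of the Extension Theorem of \cite{KR}. Once this compatibility is established, the remaining verifications (Toeplitz relations, the existence of a gauge action, and the matching of generators) proceed exactly as in the proof of Theorem~\ref{HNtensor2}.
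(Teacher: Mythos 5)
First, a point of comparison: the paper does not actually prove Theorem~\ref{HNtensor1}; it cites \cite[Theorem 7.7(i)]{KR} and remarks that, unlike the reduced case, the argument there requires extra work based on the Extension Theorem of \cite{KR}. So your overall plan --- mimic Theorem~\ref{HNtensor2}, replacing the regular representation by a faithful representation of $\O_X\cpf$, and invoke the Extension Theorem where the reduced-case shortcut fails --- is in the right spirit. The first half of your sketch is fine: the assignment $\sum_g c_gu_g\mapsto(\pi\rtimes u)(\sum_g c_gu_g)\otimes I$, $\sum_g x_gu_g\mapsto(\pi\rtimes u)(\sum_g x_gu_g)\otimes V$ is an injective Toeplitz representation of $(X\cpd,\C\cpd)$ admitting a gauge action with $I'_{(\pi,t)}=0$, so Katsura's theorem identifies the $\ca$-algebra it generates with $\T_{X\cpd}$.

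The genuine gap is your claim that the two sides embed into $B(\H\otimes\ell^2(\bbN))$ ``with identical images.'' They do not: the Toeplitz representation sends a generator $\sum_g x_gu_g$ of $X\cpd$ to $(\pi\rtimes u)(\sum_g x_gu_g)\otimes V$, whereas $(\pi\rtimes u)\otimes I$ sends the same element to $(\pi\rtimes u)(\sum_g x_gu_g)\otimes I$; the second image lies in $B(\H)\otimes\bbC I$ and the first does not. Proving that the ``insert $V$'' correspondence between the two nonselfadjoint algebras is a complete isometry \emph{is} the theorem, and it cannot be waved away. This issue is invisible in the reduced case precisely because the proof of Theorem~\ref{HNtensor2} works inside $\T_X\cpr$ rather than $\O_X\cpr$: there the shift $V$ is built into the faithful representation $\pi$ of $\T_X$ from the outset, so the copy of $(X\cpr,\C\cpr)$ inside $\overline{\pi}\rtimes\lambda(\T_X\cpr)$ literally generates $\T_{X\cpr}$ in place. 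In the full case you are forced to work inside $\O_X\cpf$, where the Cuntz--Pimsner relations already hold, so the correspondence cannot generate a Toeplitz algebra in place and the two norms must be compared by hand. Relatedly, the role you assign to the Extension Theorem is off target: the coefficient algebra of $X\cpd$ is by definition $\C\cpd\subseteq\O_X\cpf$ (not $\C\cpf$), and faithfulness of your map on it is automatic since $\pi\rtimes u$ is faithful; the Extension Theorem is needed for the norm comparison above, not for any ``lifting'' of the coefficient system. (One concrete way to close the gap without \cite{KR}: the gauge action $\beta_z$ of $\O_X$ commutes with $\alpha$, hence induces a circle action on $\O_X\cpf$ under which, for finite sums of graded words, $\|\sum_n T_n\|=\sup_{|z|=1}\|\sum_n z^nT_n\|=\|\sum_n T_n\otimes U^n\|=\|\sum_n T_n\otimes V^n\|$ with $U$ the bilateral shift, and likewise at all matrix levels.)
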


By assembling Theorem~\ref{discrenvful} and Theorem~\ref{HNtensor1} exactly as in the proof of Theorem ~\ref{HaoNgreduced}, we obtain

\begin{theorem} \label{HaoNgfull}
Let $\G$ be a discrete group acting on a non-degenerate hyperrigid $\ca$-correspondence $(X, \C)$. Then
\[
 \O_X \cpf \simeq \O_{X\cpd}.
\]
  \end{theorem}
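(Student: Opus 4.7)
The plan is to mirror the proof of Theorem~\ref{HaoNgreduced} almost verbatim, replacing each reduced-crossed-product ingredient by its full-crossed-product analogue: Theorem~\ref{HNtensor2} is replaced by Theorem~\ref{HNtensor1}, and Theorem~\ref{discrenv} is replaced by Theorem~\ref{discrenvful}. The identification $\cenv(\T^+_X) \simeq \O_X$ of \cite[Theorem 3.7]{KatsoulisKribsJFA} again serves as the bridge that lets us compare the two resulting chains of isomorphisms.

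Concretely, I would first apply Theorem~\ref{HNtensor1} to the given action $\alpha\colon \G \rightarrow (X, \C)$ to obtain
\[
\cenv\big(\T^+_X \rtimes_{\O_X, \alpha} \G\big) \simeq \O_{X\cpd}.
\]
Next I would apply Theorem~\ref{discrenvful} to the operator algebra $\A = \T^+_X$. The hyperrigidity hypothesis of that theorem is by definition the statement that $(X, \C)$ is a hyperrigid $\ca$-correspondence, which is our assumption. Using $\cenv(\T^+_X) \simeq \O_X$, the relative crossed product appearing in Theorem~\ref{discrenvful} is literally $\T^+_X \rtimes_{\O_X, \alpha} \G$, so that theorem yields
\[
\cenv\big(\T^+_X \rtimes_{\O_X, \alpha} \G\big) \simeq \O_X \cpf.
\]
Combining the two displayed isomorphisms gives the desired identification $\O_X \cpf \simeq \O_{X\cpd}$. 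Unlike the reduced case (see the remark after Theorem~\ref{HaoNgreduced}), here there is no subtle choice of ambient $\ca$-cover for $\T^+_X \rtimes \G$, since both theorems being assembled already single out the same realization, namely $\T^+_X \rtimes_{\O_X, \alpha} \G$.

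The one hypothesis of Theorem~\ref{discrenvful} that still requires verification is that $\T^+_X$ admits a contractive approximate unit of selfadjoint operators. I expect this to be the main, but essentially routine, piece of bookkeeping: any contractive approximate unit $\{e_i\}_{i\in\bbI}$ of the $\ca$-algebra $\C$ consists of selfadjoint elements of $\C \subseteq \T^+_X$, and non-degeneracy of $(X,\C)$ ensures $\{e_i\}$ remains an approximate unit for all generating monomials of $\T^+_X$, hence for $\T^+_X$ itself. Once this is checked, the theorem follows by pure assembly of the two isomorphisms above.
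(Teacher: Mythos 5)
Your proposal is correct and matches the paper's own (one-line) proof, which simply assembles Theorem~\ref{HNtensor1} and Theorem~\ref{discrenvful} exactly as in Theorem~\ref{HaoNgreduced}; your observation that both theorems already work with the same realization $\T^+_X \rtimes_{\O_X,\alpha}\G$ is the right point, and the approximate-unit check via a c.a.i.\ of $\C$ is the intended routine verification.
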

  
  In particular, Theorem~\ref{HaoNgfull} applies to the $\ca$-correspondence $X_{G}$ of a row-finite graph $G$, since the tensor algebra $\T_{X_G}^+$ is just one of the quiver algebras of Muhly and Solel \cite{MS}. Note however that Muhly and Solel show in \cite{MS} that the tensor algebra of the graph with one vertex and infinitely many edges fails to be hyperrigid. We therefore wonder whether Theorem~\ref{HaoNgfull} remains valid for an arbitrary graph $\ca$-correspondence.
  
  One final word. It is easy to see that in the case where $\G$ is amenable our $\ca$-correspondence $(\C \hat{\rtimes}\G , X\hat{\rtimes}_{\alpha} \G)$ coincides with the $\ca$-correspondence $(\C\cpf, X \cpf)$ as defined in \cite[p. 1082]{BKQR}. We do not know if this remains true in the generality of Theorem~\ref{HaoNgfull}.
  \section{More on Hyperrigidity and crossed products}
  
  The purpose of this last section is to further highlight the interplay between hyperrigidity and the crossed product theory of \cite{KR} by proving the following.
  
  \begin{theorem} \label{hr}
 Let $(\A, \G, \alpha)$ be a unital and discrete dynamical system. Then $\A$ is hyperrigid if and only if $\A\cpr$ is hyperrigid.
 \end{theorem}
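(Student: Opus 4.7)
The plan is to use Theorem~\ref{discrenv} to identify $\cenv(\A\cpr)$ with $\cenv(\A)\cpr$, so that both implications reduce to statements about the unique extension property of restrictions of non-degenerate $*$-representations of the common $\ca$-envelope.

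\smallskip

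\noindent\emph{Forward direction.} Let $\sigma\colon\cenv(\A)\cpr\to B(\H)$ be a non-degenerate $*$-representation and $\rho\colon\cenv(\A)\cpr\to B(\H)$ a completely positive extension of $\sigma|_{\A\cpr}$. Hyperrigidity of $\A$, applied to the non-degenerate $*$-representation $\sigma|_{\cenv(\A)}$, forces $\rho=\sigma$ on $\cenv(\A)$. Because $\A$ is unital, each $u_g$ lies in $\A\cpr$, so $\rho(u_g)=\sigma(u_g)$ is unitary and $u_g$ sits in the multiplicative domain of $\rho$. An application of \cite[1.3.12]{BlLM} exactly as in the proof of Theorem~\ref{discrenv} now gives $\rho(cu_g)=\sigma(cu_g)$ for all $c\in\cenv(\A)$ and $g\in\G$, and density of the finite sums $\sum c_gu_g$ forces $\rho=\sigma$. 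Hence $\sigma|_{\A\cpr}$ has the unique extension property, so $\A\cpr$ is hyperrigid.

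\smallskip

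\noindent\emph{Reverse direction.} Let $\pi\colon\cenv(\A)\to B(\H)$ be a non-degenerate $*$-representation and $\rho\colon\cenv(\A)\to B(\H)$ any completely positive extension of $\pi|_{\A}$. The difficulty is that $\rho$ need not be multiplicative, so it cannot itself be integrated against $\lambda$. To handle this, invoke Stinespring to produce a $*$-representation $\tilde{\rho}\colon\cenv(\A)\to B(\K)$ and an isometry $V\colon\H\to\K$ with $V^*\tilde{\rho}(\cdot)V=\rho$, and then form the two regular covariant representations $\overline{\pi}\rtimes\lambda_{\H}\colon\cenv(\A)\cpr\to B(\H\otimes\ell^2(\G))$ and $\overline{\tilde{\rho}}\rtimes\lambda_{\K}\colon\cenv(\A)\cpr\to B(\K\otimes\ell^2(\G))$, in the notation of the proof of Theorem~\ref{HNtensor2}. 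Let $\Psi$ denote the completely positive map on $\cenv(\A)\cpr$ obtained by compressing $\overline{\tilde{\rho}}\rtimes\lambda_{\K}$ along the isometry $V\otimes I\colon\H\otimes\ell^2(\G)\to\K\otimes\ell^2(\G)$.

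\smallskip

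The crucial point is that $V\otimes I$ commutes with the implementing unitaries in the sense that $(V\otimes I)(I\otimes\lambda_g)=(I\otimes\lambda_g)(V\otimes I)$, even though $V$ itself is not unitary. Combining this with $\alpha_g(\A)=\A$ and $\rho|_{\A}=\pi|_{\A}$, a direct computation yields $\Psi(au_g)=\overline{\pi}(a)(I\otimes\lambda_g)=(\overline{\pi}\rtimes\lambda_{\H})(au_g)$ for every $a\in\A$ and $g\in\G$, so $\Psi$ is a completely positive extension of $(\overline{\pi}\rtimes\lambda_{\H})|_{\A\cpr}$. Hyperrigidity of $\A\cpr$, applied to the non-degenerate $*$-representation $\overline{\pi}\rtimes\lambda_{\H}$, now forces $\Psi=\overline{\pi}\rtimes\lambda_{\H}$ on all of $\cenv(\A)\cpr$. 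Restricting this equality to $\cenv(\A)$ and reading off the diagonal block at $g=e$ gives $\rho(c)=\pi(c)$ for every $c\in\cenv(\A)$, so $\pi|_{\A}$ is maximal and $\A$ is hyperrigid. The main obstacle is this reverse direction: the Stinespring-plus-compression trick is what bypasses the fact that arbitrary completely positive extensions of $\pi|_{\A}$ carry no covariant information, and the success of the compression hinges on the isometry $V\otimes I$ absorbing the unitaries $I\otimes\lambda_g$ exactly.
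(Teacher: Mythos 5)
Your proof is correct, and it is built on the same two pillars as the paper's: the identification $\cenv(\A\cpr)\simeq\cenv(\A)\cpr$ from Theorem~\ref{discrenv}, and the compatibility of the regular representation construction with dilation/compression. The differences are in the mechanics. In the forward direction the paper argues dilation-theoretically: it passes to a maximal (hence multiplicative) dilation $\pi$ of a given dilation of $\rho$, notes that $\H$ is semi-invariant for $\pi(\A\cpr)$, that $\pi(\A)$ reduces $\H$ by hyperrigidity of $\A$, that semi-invariance for the $\ca$-algebra $\pi\big(\ca_r(\G)\big)$ automatically upgrades to reducing, and then lets multiplicativity propagate reducibility to all of $\pi(\A\cpr)$. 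You instead run the unique-extension-property/multiplicative-domain argument, exactly as in the proof of Theorem~\ref{discrenv}; this is a legitimate alternative (note that non-degeneracy of $\sigma|_{\cenv(\A)}$, needed to invoke hyperrigidity of $\A$, is automatic here because $\A$ is unital) and it avoids invoking Dritschel--McCullough in this direction. In the reverse direction your Stinespring-plus-compression computation is precisely the paper's observation that $\overline{\pi}\rtimes\lambda_{\K}$ dilates $\overline{\rho}\rtimes\lambda_{\H}$ whenever $\pi$ dilates $\rho$, read through the maximality/UEP equivalence; the identity $(V\otimes I)(I\otimes\lambda_g)=(I\otimes\lambda_g)(V\otimes I)$ that you isolate is exactly what makes that dilation statement true, and reading off the $(e,e)$ diagonal block correctly recovers $\rho=\pi$ on $\cenv(\A)$. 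Both routes are complete; yours is essentially the paper's proof translated into the unique extension property language, with a cleaner forward direction.
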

 
 \begin{proof}
 Assume that $\A$ is hyperrigid. Let $\rho\colon \A \cpr \rightarrow B(\H)$ be a representation of $\A$ that extends to a $*$-representation of $\cenv\big( \A \cpr\big) \simeq \cenv(\A) \cpr $. We are to prove that $\rho$ is a maximal map. Note that $\rho\mid_{\A}$ is a maximal map because $\A$ is hyperrigid.
 
 Let $\pi' \colon \A \cpr \rightarrow B(\K')$ be a dilation of $\rho$. To show that $\rho$ is a maximal map means that we have to prove that $\pi'$ is a trivial dilation, i.e., $\pi'(\A\cpr)$ reduces $\H$. By Dritchell-McCullough further dilate $\pi'$ to a maximal map 
 \[
 \pi \colon \A \cpr \longrightarrow B(\K).
 \] It suffices to show that $\pi(\A\cpr)$ reduces $\H$. Note that by maximality $\pi$ extends to a $*$-representation of $\cenv(\A) \cpr $. In particular $\pi$ is multiplicative and so $\H$ is semi-invariant for $\pi\big(\A\cpr)$.
 
Since $\pi\mid_{\A}$ dilates $\rho\mid_{\A}$, which is a maximal map, we obtain that $\pi(\A)$ reduces $\H$. On the other hand, $\H$ is semi-invariant for $\pi\big(\A\cpr)$ and in particular for the $\ca$-algebra $\pi\big(\ca_r(\G)\big)$. Hence $\pi\big(\ca_r(\G)\big)$ reduces $\H$ and since $\pi$ is multiplicative, $\pi(\A\cpr)$ reduces $\H$, as desired.

Assume conversely that $\A\cpr $ is hyperrigid. Let $\rho\colon \A  \rightarrow B(\H)$ be a representation of $\A$ that extends to a $*$-representation of $\cenv (\A) $. We are to prove that $\rho$ is a maximal map. 

Assume that $\pi\colon \A  \rightarrow B(\K)$ is a dilation of $\rho$. We need to show that $\pi(\A)$ reduces $\H$. Without loss of generality we can assume that $\pi$ is a maximal dilation of $\rho$. (See the second paragraph of the proof.) Notice now that $\overline{\pi}\rtimes \lambda_{\K}$ dilates $\overline{\rho}\rtimes \lambda_{\H}$. However $\A \cpr $ is hyperrigid and $\overline{\rho}\rtimes \lambda_{\H}$ extends to a $*$-representation of $\cenv(\A) \cpr $. Hence $\overline{\rho}\rtimes \lambda_{\H}$ is maximal and so $\overline{\pi}\rtimes \lambda_{\K}(\A \cpr)$ reduces $\H \otimes l^2(\G)$. From this it is easy to see that $\pi(\A)$ reduces $\H$ and so $\rho$ is maximal.
 \end{proof}

Of course an analogous result holds for the full crossed product. Indeed

  \begin{theorem} \label{hrf}
 Let $(\A, \G, \alpha)$ be a unital and discrete dynamical system. Then $\A$ is hyperrigid if and only iff $\A\rtimes_{\cenv(\A), \alpha} \G$ is hyperrigid.
 \end{theorem}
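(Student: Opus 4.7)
The plan is to mirror the proof of Theorem~\ref{hr}, replacing the reduced crossed product by the relative full crossed product $\A\rtimes_{\cenv(\A), \alpha} \G$ inside $\cenv(\A) \cpf$, and using induced covariant representations that integrate to the full crossed product. The one new ingredient for the forward direction is Theorem~\ref{discrenvful}: since $\A$ is assumed hyperrigid, it identifies $\cenv\big(\A\rtimes_{\cenv(\A), \alpha} \G\big) \simeq \cenv(\A)\cpf$.

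For the forward direction, suppose $\A$ is hyperrigid and let $\rho\colon \A\rtimes_{\cenv(\A), \alpha} \G \rightarrow B(\H)$ be a representation extending to a non-degenerate $*$-representation of $\cenv(\A)\cpf$. Given any dilation, apply Dritschel--McCullough to obtain a maximal dilation $\pi\colon \A\rtimes_{\cenv(\A), \alpha} \G \rightarrow B(\K)$, which by maximality extends to a $*$-representation of $\cenv(\A)\cpf$; hence $\pi$ is multiplicative and $\H$ is semi-invariant for $\pi(\A\rtimes_{\cenv(\A), \alpha}\G)$. Because $\pi\mid_\A$ dilates $\rho\mid_\A$ and the latter is maximal by hyperrigidity of $\A$, we get that $\pi(\A)$ reduces $\H$. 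Since $\A$ is unital, the universal unitaries $u_g$ and their adjoints $u_g^* = u_{g^{-1}}$ all lie in $\A\rtimes_{\cenv(\A), \alpha} \G$, so the $\ca$-subalgebra of $\cenv(\A)\cpf$ they generate is contained in $\A\rtimes_{\cenv(\A), \alpha} \G$. Exactly as in the proof of Theorem~\ref{hr}, semi-invariance for this selfadjoint subalgebra forces $\H$ to reduce it; combined with multiplicativity, this yields that $\pi(\A\rtimes_{\cenv(\A), \alpha}\G)$ reduces $\H$, proving $\rho$ maximal.

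For the converse, suppose $\A\rtimes_{\cenv(\A), \alpha}\G$ is hyperrigid and let $\rho\colon \A \rightarrow B(\H)$ extend to a $*$-representation of $\cenv(\A)$. Take a maximal dilation $\pi\colon \A \rightarrow B(\K)$ extending to a $*$-representation of $\cenv(\A)$. Form the induced regular-type covariant representations $(\overline{\rho}, \id\otimes \lambda)$ on $\H \otimes l^2(\G)$ and $(\overline{\pi}, \id\otimes \lambda)$ on $\K \otimes l^2(\G)$ of $(\cenv(\A), \G, \alpha)$; since every covariant pair integrates to the full crossed product, $\overline{\rho}\rtimes\lambda$ and $\overline{\pi}\rtimes\lambda$ are $*$-representations of $\cenv(\A)\cpf$, with the second dilating the first upon restriction to $\A\rtimes_{\cenv(\A), \alpha}\G$. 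Now $\cenv(\A)\cpf$ is a $\ca$-cover of $\A\rtimes_{\cenv(\A), \alpha}\G$, and by the standard equivalence (see \cite{Arvhyp2}) between hyperrigidity and the maximality of restrictions of non-degenerate $*$-representations from \emph{any} $\ca$-cover --- which follows from the Shilov-ideal decomposition together with the unique extension property --- the restriction $\overline{\rho}\rtimes\lambda\mid_{\A\rtimes_{\cenv(\A), \alpha}\G}$ is maximal. Therefore $\overline{\pi}\rtimes\lambda(\A\rtimes_{\cenv(\A), \alpha}\G)$ reduces $\H\otimes l^2(\G)$, and compressing to $\H\otimes \delta_e$ gives that $\pi(\A)$ reduces $\H$; thus $\rho$ is maximal.

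The main obstacle is in the reverse direction. Unlike Theorem~\ref{discrenv}, which unconditionally identifies $\cenv(\A\cpr)$ with $\cenv(\A)\cpr$, Theorem~\ref{discrenvful} identifies $\cenv(\A\rtimes_{\cenv(\A), \alpha}\G)$ with $\cenv(\A)\cpf$ only under the hyperrigidity hypothesis we are trying to establish. To bypass this circularity one views $\cenv(\A)\cpf$ merely as a $\ca$-cover of $\A\rtimes_{\cenv(\A), \alpha}\G$ and invokes the $\ca$-cover-independent reformulation of hyperrigidity; handling this point carefully is the crux of the argument.
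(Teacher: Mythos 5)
The paper offers no actual proof of Theorem~\ref{hrf} --- it is stated with only the remark that the argument is analogous to Theorem~\ref{hr} --- so the comparison can only be with that template. Your forward direction carries the template over correctly: since $\A$ is assumed hyperrigid there, Theorem~\ref{discrenvful} legitimately identifies $\cenv\big(\A\rtimes_{\cenv(\A),\alpha}\G\big)$ with $\cenv(\A)\cpf$, and the rest (maximal dilation, reduction of $\H$ by $\pi(\A)$ via hyperrigidity of $\A$, semi-invariance hence reduction for the selfadjoint algebra generated by the $u_g$, then multiplicativity) is sound.

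The converse direction, however, contains a genuine gap. You correctly isolate the circularity --- Theorem~\ref{discrenvful} is unavailable, so $\cenv(\A)\cpf$ is only known to be a $\ca$-cover of $\A\rtimes_{\cenv(\A),\alpha}\G$, not its $\ca$-envelope --- but the principle you invoke to bypass it is false. Hyperrigidity in the sense of Definition~\ref{defn;hyperrigid} does \emph{not} imply that the restriction of every non-degenerate $*$-representation of an \emph{arbitrary} $\ca$-cover is maximal: the equivalences in \cite[Theorem 2.1]{Arvhyp2} are all formulated relative to the single $\ca$-algebra generated by the given copy of the algebra, and a representation of a larger cover need not annihilate the Shilov ideal, so its restriction need not factor through $\cenv$ at all. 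Concretely, the disc algebra $A(\bbD)$ is Dirichlet, hence hyperrigid, and $C(\overline{\bbD})$ is a $\ca$-cover of it; evaluation at $0$ is a non-degenerate $*$-representation of $C(\overline{\bbD})$ whose restriction $f\mapsto f(0)$ to $A(\bbD)$ is not maximal (it dilates nontrivially to $f\mapsto f(S)$ with $S$ the unilateral shift). Thus hyperrigidity of $\A\rtimes_{\cenv(\A),\alpha}\G$ does not by itself give maximality of $\overline{\rho}\rtimes\lambda\mid_{\A\rtimes_{\cenv(\A),\alpha}\G}$, and the step on which your converse rests is unjustified. To repair it you would need either a separate argument that $\overline{\rho}\rtimes\lambda$ annihilates the Shilov ideal of $\A\rtimes_{\cenv(\A),\alpha}\G$ in $\cenv(\A)\cpf$ (equivalently, that its restriction factors through $\cenv\big(\A\rtimes_{\cenv(\A),\alpha}\G\big)$), or an altogether different route to the converse.
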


\end{document}